\newtheorem{ass}{Assumption}
\newtheorem{rmk}{Remark}
\newtheorem{lmm}{Lemma}
\newtheorem{cor}{Corollary}
\newtheorem{thm}{Theorem}
\newcommand{\ints}{\mathbb{N}}
\newcommand\intrng[2]{\ints_{[#1,#2]}}
\newcommand{\reals}{\mathbb{R}}
\newcommand{\mc}[1]{\mathcal{#1}}
\newcommand{\proj}[1]{\Pi_{#1}}
\newcommand{\Int}[0]{\mathrm{Int}~}
\renewcommand{\ker}[0]{\mathrm{Ker}~}
\newcommand{\defeq}[0]{\equiv}
\DeclareMathOperator*{\argmin}{arg\,min}
\begin{document}
\title{A Feasibility Governor for Enlarging the Region of Attraction of Linear Model Predictive Controllers \thanks{D. Liao-McPherson is with ETH Zürich. Email: \texttt{dliaomc@ethz.ch}.}
\thanks{T. Cunis and I. Kolmanovsky are with the University of Michigan, Ann Arbor. Email: \texttt{\{tcunis, ilya\}@umich.edu}.} \thanks{ M. Nicotra and T. Skibik are with the University of Colorado, Boulder, Email: \texttt{\{marco.nicotra, terrence.skibik\}@colorado.edu}.} \thanks{This research is supported by the National Science Foundation through awards CMMI 1904441 and CMMI 1904394 and by the Toyota Research Institute (TRI). TRI provided funds to assist the authors with their research but this article solely reflects the opinions and conclusions of its authors and not TRI or any other Toyota entity.}}
\author{Dominic Liao-McPherson, Terrence Skibik, Torbjørn Cunis, Ilya Kolmanovsky, and Marco M. Nicotra}

\maketitle

\begin{abstract}
This paper proposes a method for enlarging the region of attraction of Linear Model Predictive Controllers (MPC) when tracking piecewise-constant references in the presence of pointwise-in-time constraints. It consists of an add-on unit, the Feasibility Governor (FG), that manipulates the reference command so as to ensure that the optimal control problem that underlies the MPC feedback law remains feasible. Offline polyhedral projection algorithms based on multi-objective linear programming are employed to compute the set of feasible states and reference commands. Online, the action of the FG is computed by solving a convex quadratic program. The closed-loop system is shown to satisfy constraints, be asymptotically stable, exhibit zero-offset tracking, and display finite-time convergence of the reference.
\end{abstract}


\section{Introduction}
Model Predictive Control \cite{rawlings2018model,goodwin2006constrained,mayne2014model} (MPC) defines a feedback policy as the solution of a receding horizon optimal control problem (OCP). MPC is widely used in applications; it enables high-performance control while systematically enforcing state and control constraints and is supported by a robust theoretical literature. Stability guarantees are typically obtained by incorporating ``terminal ingredients'' into the OCP. For example, adding a terminal penalty and an invariant set based terminal constraint is sufficient to guarantee asymptotic stability and constraint satisfaction\cite{mayne2000mpc,chen1998quasi}; the closed-loop region of attraction (ROA) is then the set of all states from which it is possible to reach the terminal set within the prediction horizon. 

Many practical applications of MPC require the capability to track non-zero steady state references and to safely transition between them. However, if the change in the reference is large the system may not be able to reach the new terminal set within the prediction horizon, resulting in infeasibility and failure of the MPC controller. The obvious strategy for avoiding infeasibility is increasing the size of the ROA. This can be done by enlarging the terminal set or increasing the prediction horizon. Unfortunately, the maximum size of the terminal set is fixed by the constraints \cite{gilbert1991linear}, and increasing the prediction horizon increases the computational footprint of the controller. 

Another strategy is to treat aspects of the terminal set, e.g., size, location, or shape, as optimization variables and use these additional degrees of freedom to enlarge the feasible set. This approach has been applied to economic operation of nonlinear systems with terminal state constraints \cite{fagiano2013generalized} and regulation of linear systems using terminal set constraints \cite{gonzalez2009enlarging}. It has also been applied to reference tracking problems for linear systems \cite{limon2008mpc,simon2014reference} using various parameterizations of the terminal sets. Computing a contractive sequence of terminal sets offline which are incorporated into the OCP to enlarge the ROA is proposed in \cite{limon2005enlarging}. The major disadvantage of these approaches is that they require redesigning the OCP and increasing the computational complexity of the controller.

\begin{figure}[htbp]
	\centering
	\includegraphics[width=\columnwidth]{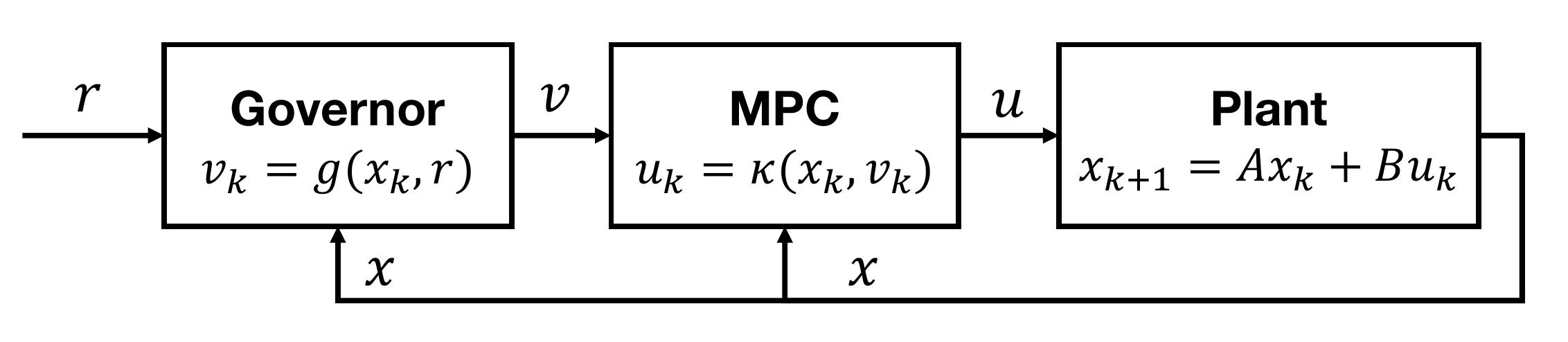}
	\caption{A block diagram of the control architecture. Given a reference $r$, the Feasibility Governor manipulates the auxiliary reference $v$ to ensure that the primary MPC controller is able to produce a valid control input $u$.}
	\label{fig:fg_block_diagram}
\end{figure}

In this paper, we propose the Feasibility Governor (FG), an add-on unit in the tradition of reference/command governors \cite{garone2017reference,bemporad1997nonlinear}, that modifies the reference signal to ensure that the terminal set remains reachable within the prediction horizon. The FG does not require any modifications to the existing MPC controller, exhibits finite time convergence to the desired reference, and expands the ROA of the MPC controller to all states that can reach the terminal set of any steady state admissible reference. It also takes advantage of offline polyhedral set manipulation tools \cite{herceg2013multi,lohne2017vector} to limit online complexity and minimize conservatism. The proposed control architecture is illustrated in Figure~\ref{fig:fg_block_diagram}.

There is existing literature on avoiding infeasibility in MPC using reference manipulation. The dual-mode controller in \cite{chisci2003dual} features a recovery mode that simultaneously computes a modified reference and control input. This approach converges in finite-time but is invasive and may reduce performance. An FG like algorithm is proposed in \cite{olaru2005compact} and is used as an intermediate design stage in the construction of a piecewise affine control law that combines a governor and explicit MPC controller into a single unit. This approach suffers from the well known complexity limitations of explicit MPC \cite{bemporad2002explicit} as the dimension of the state, prediction horizon, and number of constraints increases. This paper shows that the FG can be scaled to larger systems/longer horizons, and provides a more detailed treatment of both the theoretical properties of the governor, including using under-approximation of the feasible set, and the computation of the terminal and feasible sets. A governor-like algorithm using ellipsoidal terminal sets is proposed in \cite{mayne2016generalized} and can be considered a special case of the FG that uses a specific reference parameterization and conservative inner approximation of the feasible set. In \cite{DEMPC_tac} the authors propose a suboptimal continuous-time analog of the governor in \cite{mayne2016generalized}. Finally, a spatial governor is proposed in \cite{di2018cascaded}. It is specific to precision machining applications and adjusts the velocity profile passed to a path tracking MPC controller to ensure recursive feasibility of constraints representing manufacturing error tolerances.

The layout of the paper is as follows: Section~\ref{ss:problem_setting} contains the problem setting and control objectives. Section~\ref{ss:control_strategy} describes the primary MPC controller. Section~\ref{ss:fg_design} introduces the FG, including implementation details, and summarizes its theoretical properties which are then proven rigorously in Section~\ref{ss:theoretical_properties}. Finally, Section~\ref{ss:numerical_examples} illustrates the utility of the FG through simulation studies and Section~\ref{ss:conclusions} offers some conclusions and perspectives on future work.

\subsection{Notation}
For vectors $a$ and $b$, $(a,b) = [a^T~~b^T]^T$. The identity and zero matrices are denoted $I_N \in \reals^{N\times N}$ and $0_{N \times M} \in \reals^{N \times M}$, respectively with the subscripts absent whenever the dimensions are clear from context. Given $M\in \reals^{m\times n}$ and $\mc{U}\subseteq \reals^n$, $\ker M = \{x~|~Mx = 0\}$, $M \mc{U} = \{Mx~|~x\in \mc{U}\}$, $M^{-1} \mc{U} = \{x~|~Mx \in \mc{U}\}$, and $\Int\!\mc{U}$ denotes the interior of $\mc{U}$. Set addition/subtraction is defined as $\mc{U} \pm \mc{V} = \{u\pm v~|~(u,v) \in \mc{U} \times \mc{V}\}$ and for $\lambda \in \reals$, $\lambda \mc{U} = \{\lambda u~|~u\in \mc{U}\}$. Positive (semi) definiteness of a matrix $P \in \reals^{n \times n}$ is denoted by $(P \succeq 0)$ $P \succ 0$; and $\|x\|_P = \sqrt{x^TPx}$ for $x \in \reals^n$. Consider $x\in \reals^n$, $y\in \reals^m$ and a set $\Gamma \subseteq \reals^{n+m}$, the projection of $\Gamma$ onto $x$ is the image $\Pi_x \Gamma$ where $\Pi_x = [I_{n}~0_{n\times m}]$, i.e., $x = \Pi_x [x^T,y^T]^T$. The slice (or cross-section) operation is $S_y(\Gamma,x) = \{y~|~(x,y)\in \Gamma\}$. For $x\in \reals^n$, and $\delta \geq 0$, $\mc{B}_\delta(x) = \{y~|~\|y-x\|\leq \delta \}$ For a sequence $\{x_k\}\subseteq \reals^n$ and a set $\Gamma \subseteq \reals^n$ we write that $x_k \to \Gamma$ as $k\to \infty$, if and only if $\lim_{k\to\infty}~\inf_{y\in \Gamma} \|y-x_k\| = 0$. Our use of comparison functions, i.e., class $\mc{K},\mc{K}_\infty$ and $\mc{KL}$ functions follows \cite{kellett2014compendium}.

\section{Problem Setting} \label{ss:problem_setting}
Consider the linear time invariant (LTI) system 
\begin{subequations} \label{eq:LTI_system}
\begin{align} 
x_{k+1} &= A x_k + B u_k\\
y_k &= Cx_k + D u_k\\
z_k &= E x_k + F u_k,\end{align}
\end{subequations}
where $k\in \ints$ is the discrete-time index and $x_k \in \reals^{n_x}$, $u_k \in \reals^{n_u}$, $y_k \in \reals^{n_y}$, and $z_k \in \reals^{n_z}$ are the states, control inputs, constrained outputs, and tracking outputs, respectively.
\begin{ass} \label{ass:stabilizable}
The pair $(A,B)$ is stabilizable.
\end{ass}

The system \eqref{eq:LTI_system} is subject to pointwise-in-time constraints
\begin{equation}
   \forall k \in \ints \quad y_k \in \mc{Y},
\end{equation}
where $\mc{Y}\subseteq \reals^{n_y}$ is a specified set of constraint.
\begin{ass}\label{ass:constraint_set}
The set $\mc{Y}$ is a compact polyhedron with representation $\mc{Y} = \{y~|~Y y \leq h\}$ and satisfies $0 \in \Int \mc{Y}$.
\end{ass}
\noindent As detailed in \cite{limon2008mpc}, Assumption~\ref{ass:stabilizable} implies that the matrix
\begin{equation}
    Z = \begin{bmatrix}
		I-A & B & 0\\
		E & F & -I
	\end{bmatrix}
\end{equation}
satisfies $\ker\!\!(Z)\neq\{0\}$. As a result, is possible to introduce an auxiliary reference $v\in \reals^{n_v}$ that parameterizes the equilibrium manifold, i.e., every solution to $Z~[x^T,u^T,z^T]^T = 0$, as
\begin{equation} \label{eq:xbardef}
\begin{bmatrix}
	\bar x_v \\ \bar u_v \\ \bar z_v
\end{bmatrix} = \begin{bmatrix}
	G_x\\G_u\\G_z
\end{bmatrix} v
\end{equation}
where $G^T \defeq \left[G_x^T~~G_u^T~~G_z^T\right]$ is a basis for $\ker\!\!(Z)$. 

The following assumption excludes pathological cases, e.g., $G_z = 0$, that are indicative of an ill-posed problem.
\begin{ass} \label{eq:G_full_rank}
The matrix $G_z$ is full rank.
\end{ass}
\begin{rmk}
The vector $v$ is a minimal parameterization of the equilibrium manifold of \eqref{eq:LTI_system}. If $G_z$ is not full row rank, e.g. if $n_z > n_v$, the output tracking problem is ill-posed and only $r\in G_z \reals^{n_v}$ are achievable. If $n_z = n_v$ and $G_z$ is invertible, the reference uniquely determines the target equilibrium and it is possible to choose $G$ such that $r = v$. If $n_z < n_v$, there are multiple equilibria satisfying $E\bar x_v+F\bar u_v = r$.
\end{rmk}
Next, we introduce a design parameter $\epsilon \in (0, 1)$ and the corresponding set of strictly steady-state admissible auxiliary references
\begin{equation} \label{eq:Veps}
   \mc{V}_\epsilon \defeq G_y^{-1} (1-\epsilon) \mc{Y} = \{v~|~
	 G_y v \in (1-\epsilon)\mc{Y}\},
\end{equation}
where $G_y = CG_x + DG_u$, and strictly admissible references 
\begin{equation}
	\mc{R}_\epsilon \defeq G_z \mc{V}_\epsilon = \{G_zv~|~v\in \mc{V}_\epsilon\}.
\end{equation} 
\begin{rmk}
The parameter $\epsilon$ is used because MPC controllers cannot stabilize points on the boundary of the feasible set.
\end{rmk}
\noindent Given Assumptions~\ref{ass:stabilizable}--\ref{ass:constraint_set} as the only limitations to our problem setting, we now state the control objectives of this paper. \medskip

\noindent\textbf{Control Objectives:} Given the LTI system \eqref{eq:LTI_system}, let $\mathcal{Y}\subseteq\reals^{n_y}$ be a set of constraints, and let $r\in\reals^{n_z}$ be a target reference. The goal of this paper is to design a full state feedback law that achieves the following objectives:
\begin{itemize}
    \item \textit{Safety:} Ensure $y_k\in\mathcal{Y}\quad \forall k \geq 0$;
    \item \textit{Convergence:} $\lim_{k\to\infty}z_k= r^*$, where 
    \begin{equation*}
      r^\star = \argmin_{s\in \mc{R}_\epsilon}~\|s-r\|.
    \end{equation*}
	\item \textit{Asymptotic Stability:} $\lim_{k\to\infty} (x_k,v_k)  = (x^*_r,v^*_r)$ where $(x^*_r,v^*_r) = (G_x v^*_r,v^*_r)$ is a stable equilibrium satisfying $r^* = G_z v^*_r$.
\end{itemize}

\begin{rmk}
When the tracking problem is well posed, i.e., $r\in \mc{R}_\epsilon$, we recover $\lim_{k\to\infty}z_k=r$.
\end{rmk}

\begin{rmk}
Assumption \ref{ass:constraint_set} restricts our setting to polyhedral constraints which simplifies some implementation aspects. All the theoretical results in this paper still hold under the weaker assumption that $\mc{Y}$ is compact, convex and contains the origin in its interior.
\end{rmk}

\section{Control Strategy} \label{ss:control_strategy}
Due to the constraints, we approach the control objectives using a typical MPC formulation where the feedback policy is defined using the solution to the following optimal control problem (OCP)
\begin{subequations} \label{eq:LMPC_OCP}
\begin{alignat}{2}
\underset{\mu}{\mathrm{min}}& &&||\xi_N - \bar x_v||_P^2 + \sum_{i=0}^{N-1} ||\xi_i - \bar x_v||_Q^2 + ||\mu_i - \bar u_v||_R^2 \label{eq:ocp_cost}\\
\mathrm{s.t.}& ~ &&~\xi_0 = x, \label{eq:ocp_cstr1} \\
& &&~\xi_{i+1} = A\xi_i + B \mu_i, ~~~ i \in \intrng{0}{N-1},\\
& &&~C \xi_i + D \mu_i \in \mc{Y},\label{eq:ocp_cstr2} ~~~~~~ i \in \intrng{0}{N-1},\\
& &&\qquad(\xi_N,v) \in \mc{T}, \label{eq:ocp_cstr3}
\end{alignat}
\end{subequations}
where $N\in \ints_{> 0}$ is the prediction horizon, $\mu = (\mu_0,\ldots \mu_{N-1})$ are the decision variables, $P$, $Q$, and $R$ are weighting matrices, and $\mc{T}\subseteq\reals^{n_x}\times\reals^{n_v}$ is the terminal set, which is assumed to be polyhedral\footnote{Other representations, e.g., ellipsoidal, are admissible but more challenging from an implementation perspective.}, i.e.,
\begin{equation}
	\mc{T} = \{(x,v)~|~ T_x x + T_v v \leq c\},
\end{equation}
and $\bar x_v, \bar u_v$ are defined in \eqref{eq:xbardef} and will be manipulated.

\begin{rmk}
The terminal constraint \eqref{eq:ocp_cstr3} is often written in the equivalent form $\xi_N\!\in\! \mc{X}(v)= S_x(\mc{T},v) = \{x~|~(x,v) \in \mc{T}\}$.
\end{rmk}

The following assumptions ensure that \eqref{eq:LMPC_OCP} is well-posed and can be used to construct a stabilizing feedback law.

\begin{ass}\label{ass:DARE}
The stage cost matrices satisfy $Q = Q^T \succeq 0$, with $(A,Q)$ observable, and $R = R^T \succ 0$.
\end{ass}

Once the stage weights are defined, the terminal penalty $P$ and the terminal set mapping $\mc{X}$ can be obtained using a gain $K \in \reals^{n_u \times n_x}$ and a fictitious terminal control law
\begin{equation} \label{eq:terminal_control_law}
	\kappa_N(x,v) \defeq \bar u_v -K(x-\bar x_v).
\end{equation}

\begin{ass} \label{ass:terminal_cost}
Given $(x,v)\in \mc{T}$ and the terminal control law \eqref{eq:terminal_control_law}, the terminal cost matrix satisfies $P = P^T\succeq0$ and
\begin{equation}\label{eq:lyapunov}
	\|(A-BK)\delta x\|_P^2 - \|\delta x\|_P^2 \leq -\|\delta x\|_{(Q+K^TRK)}^2
\end{equation}
where $\delta x = x - \bar x_v$.
\end{ass}

\begin{ass} \label{ass:terminal_set}
The terminal set $\mc{T}$ is invariant and constraint admissible under \eqref{eq:terminal_control_law}, i.e., for all $(x,v)\in\mathcal{T},$
\begin{subequations}\label{eq:terminal_set}
\begin{align}
    Ax + B\kappa_N(x,v)&\in\mathcal{X}(v), \\
    Cx + D\kappa_N(x,v)&\in\mathcal{Y}.
\end{align}
\end{subequations}
\end{ass}

\begin{rmk}
The terminal control law \eqref{eq:terminal_control_law} is not used online but is needed to synthesize $P$ and $\mc{T}$. A conservative choice is $K = 0$, $P = 0$, and $\mc{T} = \{(\bar x_v,v)\}$. Alternatively, for any $K$ such that $A-BK$ is Schur, $P$ can be obtained by reformulating and then solving \eqref{eq:lyapunov} as a discrete Lyapunov equation\footnote{Given the linear quadratic regulator $K = (R+B^TPB)^{-1}(B^T PA)$, the discrete Lyapunov equation \eqref{eq:lyapunov} coincides with the discrete Riccati equation $P = Q+A^TPA-(A^TPB)(R+B^TPB)^{-1}(B^T PA)$.}. Polyhedral approximations of the largest possible set $\mc{T}$ can then be computed offline as detailed in Appendix A.
\end{rmk}

It is only possible to compute a control action if \eqref{eq:LMPC_OCP} admits a solution. The set of all parameters for which this is possible, i.e., the feasible set, is
\begin{equation}
    \label{eq:feasible_set}
	\Gamma_N \defeq \{(x,v)~|~\exists~\mu:~ \eqref{eq:ocp_cstr1}-\eqref{eq:ocp_cstr3}\} \subseteq \reals^{n_x}\times\reals^{n_v},
\end{equation}  
which is the $N$-step backwards reachable set of $\mc{T}$. The set of strictly steady-state admissible equilibria is
\begin{equation}\label{eq:Sigma}
   \Sigma \defeq \{(x,v)~|~
	 x=G_x v,~ v \in\mc{V}_\epsilon \} .
\end{equation} 

If $\mc{T}$ is polyhedral, then $\Gamma_N$ is polyhedral as well and can be computed offline, see Section~\ref{ss:feasible_set_computation}. Figure~\ref{fig:Lemma1(1)} illustrates the various sets defined in this section.
\begin{figure}[h]
	\centering
	\includegraphics[width = \columnwidth]{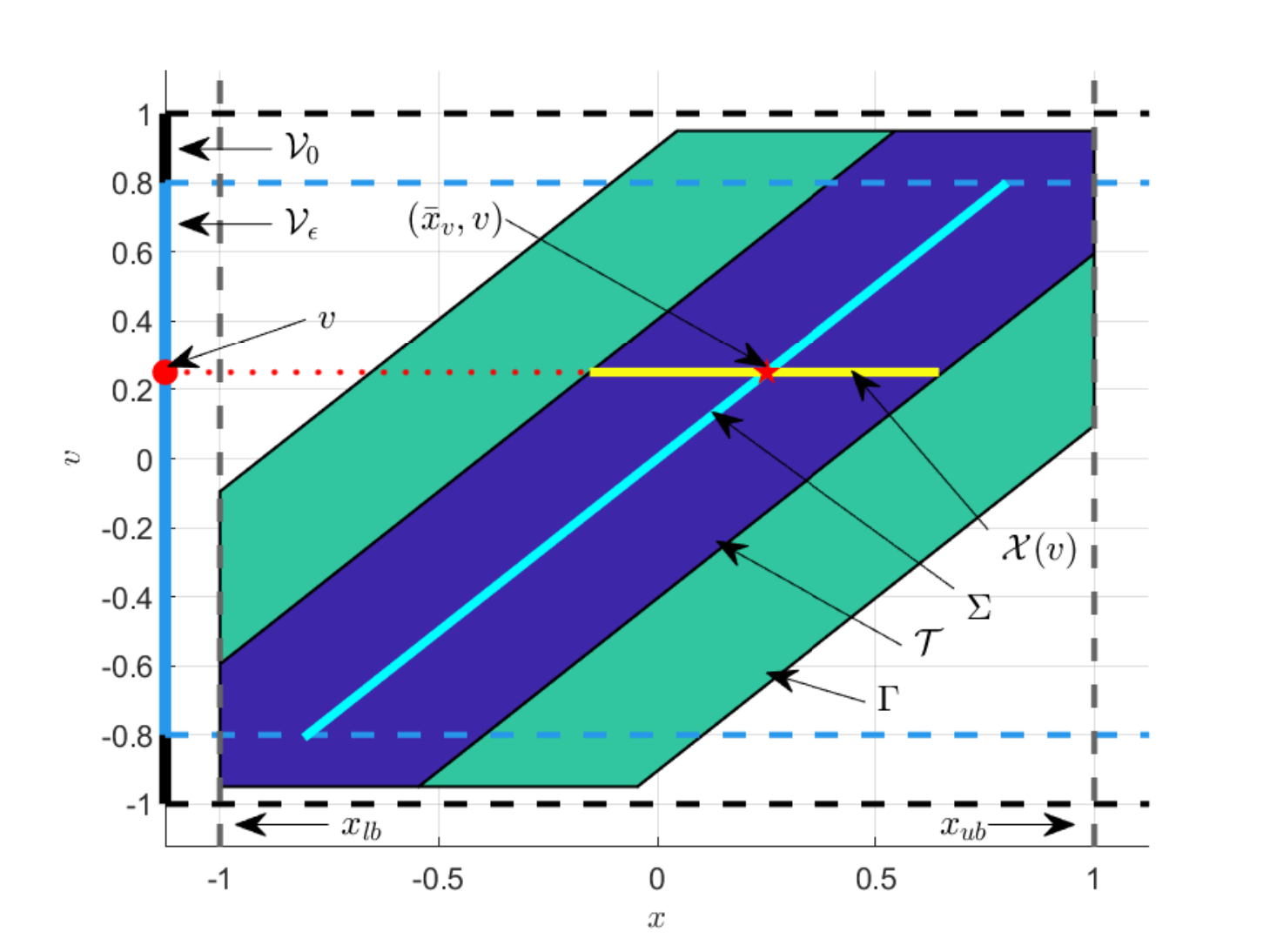}
	\caption{The sets used in the paper for the integrator $x_{k+1} = x_k + u_k$ subject to $|x_k| \leq 1$, $|u_k|\leq 0.25$ and with $\epsilon = 0.2$, $\mathcal{T} = \tilde{O}_{\infty}^{0.05}$ and $N=2$}
	\label{fig:Lemma1(1)}
\end{figure}

The following technical assumption is needed to guarantee convergence and always holds when $\mc{T}$ is synthesized using the procedure in Appendix A.

\begin{ass} \label{ass:xv_in_interior}
$\Sigma \subset \Int\Gamma_N$
\end{ass}

\begin{lmm}\label{lmm:strictly_ss}
Given Assumption~\ref{ass:stabilizable} either of the following conditions are sufficient for Assumption~\ref{ass:xv_in_interior} to hold.
\begin{enumerate}
    \item $\Sigma \subset \Int \mc{T}$;
    \item $(A,B)$ is controllable, $\Sigma \subseteq \mc{T}$, and $N \geq \nu$, where the controllability index~$\nu$ is the smallest positive integer such that $\begin{bmatrix}
	B & AB & \cdots & A^{\nu-1}B
\end{bmatrix}$ is full rank.
\end{enumerate}
\end{lmm}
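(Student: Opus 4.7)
For each $(x_0, v_0) \in \Sigma$ I will exhibit an input sequence $\mu$ that is feasible for the OCP and whose feasibility is preserved under small joint perturbations of $(x, v)$ and $\mu$; since $\Gamma_N$ is the projection of a polyhedron onto $(x, v)$, this places $(x_0, v_0) \in \Int\Gamma_N$. The preparatory fact used in both parts is that $0 \in \Int\mc{Y}$ together with convexity of $\mc{Y}$ implies $(1-\epsilon)\mc{Y} \subset \Int\mc{Y}$, so $v_0 \in \mc{V}_\epsilon$ yields $G_y v_0 \in \Int\mc{Y}$; the stationary output at any equilibrium in $\Sigma$ is therefore strictly admissible.

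For Condition~1, I would take the stationary trajectory $\xi_i \equiv G_x v_0$, $\mu_i \equiv G_u v_0$. Every path output equals $G_y v_0 \in \Int\mc{Y}$, and the terminal pair $(\xi_N, v_0) = (x_0, v_0) \in \Sigma \subset \Int\mc{T}$ by hypothesis, so every inequality is strict. Continuity of the polyhedral constraints in $(x, v, \mu)$ then yields a neighborhood of $(x_0, v_0, \mu)$ of strictly feasible triples whose projection onto $(x, v)$ contains an open neighborhood of $(x_0, v_0)$.

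For Condition~2 the terminal constraint may be active at $(x_0, v_0)$, so I would lean on controllability instead. With $N \geq \nu$ the reachability matrix $R_N = [A^{N-1}B~\cdots~B]$ has full row rank $n_x$, so parametrizing $\mu_i = G_u v + \delta\mu_i$ and taking the minimum-norm dead-beat correction $\delta\mu = -R_N^\dagger A^N (x - G_x v)$ defines a linear (hence continuous) map $(x, v) \mapsto \mu(x, v)$ that forces $\xi_N = G_x v$ exactly. At $(x_0, v_0)$ the correction vanishes and the trajectory is stationary, so path constraints hold strictly; by continuity they remain strict on a neighborhood, and the terminal pair reduces to $(G_x v, v)$.

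The main obstacle is verifying $(G_x v, v) \in \mc{T}$ for all $v$ in some neighborhood of $v_0$, which $\Sigma \subseteq \mc{T}$ alone only secures at $v = v_0$. I would resolve this using Assumption~\ref{ass:terminal_set} together with the standard construction of $\mc{T}$ (Appendix~A) as the maximal set on which $\kappa_N$ maintains output admissibility: then $(G_x v, v) \in \mc{T}$ holds whenever $G_y v \in \mc{Y}$, and the preparatory fact combined with continuity of $v \mapsto G_y v$ shows $G_y v \in \Int\mc{Y}$ on a neighborhood of $v_0$, closing the argument.
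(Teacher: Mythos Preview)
Your approach for Condition~1 is correct but differs from the paper's. The paper argues purely by set containment: since the terminal control law~\eqref{eq:terminal_control_law} is a feasible input sequence for the OCP from any $(x,v)\in\mc{T}$ (Assumption~\ref{ass:terminal_set}), one has $\mc{T}\subseteq\Gamma_N$, and hence $\Sigma\subset\Int\mc{T}\subseteq\Int\Gamma_N$ in one line. Your strictly-feasible-trajectory construction reaches the same conclusion but is more labor-intensive; the containment argument bypasses the perturbation step entirely.

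For Condition~2, both you and the paper use deadbeat control, but the paper packages it as a \emph{feedback} gain $L$ with $(A-BL)^\nu=0$, forms the maximal output-admissible set $O_\infty^L$ for the resulting closed loop, and then invokes the Gilbert--Tan theorem \cite[Theorem~2.1]{gilbert1991linear} to obtain $\Sigma\subset\Int O_\infty^L$ directly from the strict steady-state admissibility $G_yv\in(1-\epsilon)\mc{Y}\subset\Int\mc{Y}$. Together with $O_\infty^L\subseteq\Gamma_N$ this delivers the interior conclusion in one stroke, without an explicit perturbation argument on path constraints.

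Your version has a gap at the terminal step. You are right that $\Sigma\subseteq\mc{T}$ does not by itself secure $(G_xv,v)\in\mc{T}$ for $v$ in a full neighborhood of $v_0$, though you misstate the scope: it holds for all $v\in\mc{V}_\epsilon$, not only at $v=v_0$; the real issue is $v_0\in\partial\mc{V}_\epsilon$. Your resolution imports the specific construction of $\mc{T}$ from Appendix~A, which is an additional hypothesis not present in the lemma, and even under that construction the claim ``$(G_xv,v)\in\mc{T}$ whenever $G_yv\in\mc{Y}$'' is not quite right, since $\tilde O_\infty^{\epsilon'}$ requires $G_yv\in(1-\epsilon')\mc{Y}$. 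To be fair, the paper's proof is itself terse on the corresponding step---the asserted inclusion $O_\infty^L\subseteq\Gamma_N$ also needs $(G_xv,v)\in\mc{T}$ for the deadbeat endpoint---so you have put your finger on a genuine subtlety; but the fix you propose is not self-contained relative to the stated hypotheses.
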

\begin{proof}
See Appendix C.
\end{proof}

The MPC feedback policy $\kappa : \Gamma_n \to \reals^{n_u}$ is
\begin{equation} \label{eq:MPC_feedback}
	\kappa(x,v) \defeq \mu^\star_0(x,v)
\end{equation}
where $\mu^\star(x,v)=[\mu_0^{\star T},\mu_1^{\star T},\ldots,\mu_{N-1}^{\star T}]^T$ is the minimizer of \eqref{eq:LMPC_OCP}, and is defined for $(x,v)\in \Gamma_N$. The following theorem summarizes the properties of the closed-loop system for a constant auxiliary reference.
\begin{thm} \label{thm:MPC_stab}
Let Assumptions~\ref{ass:stabilizable}--\ref{ass:terminal_set} hold and let $\phi(\ell,x,v)$ denote the solution of the closed-loop dynamics
\begin{equation} \label{eq:closed_loop_dynamics}
x_{k+1} = f(x_k,v) \equiv Ax_k + B\kappa(x_k,v).
\end{equation} 
starting from the initial condition $x_0 = x$ at timestep $\ell \geq 0$.
Then for all $(x,v) \in \Gamma_N$:
\begin{itemize}
    \item $(\phi(\ell,x,v),v)\in\Gamma_N,~\forall \ell\geq 0$;
    \item $y_\ell \in \mc{Y},~\forall \ell\geq 0$;
    \item $\lim_{\ell\to\infty} \phi(\ell,x,v) = \bar x_{v}$.
\end{itemize}
If, in addition, $v \in \Int \mc{V}_0$ then $\bar x_{v}$ is asymptotically stable.
\end{thm}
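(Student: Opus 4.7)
The plan is to follow the classical receding-horizon stability template, with the optimal value function $V(x,v)$ of \eqref{eq:LMPC_OCP} serving as the Lyapunov candidate. For $(x,v)\in\Gamma_N$ denote the minimizer by $\mu^\star(x,v)=(\mu^\star_0,\ldots,\mu^\star_{N-1})$ with associated predicted trajectory $\xi^\star_0,\ldots,\xi^\star_N$, and construct the shifted candidate
\begin{equation*}
\tilde\mu = \bigl(\mu^\star_1,\ldots,\mu^\star_{N-1},\,\kappa_N(\xi^\star_N,v)\bigr)
\end{equation*}
obtained by discarding $\mu^\star_0$ and appending the terminal control law \eqref{eq:terminal_control_law}. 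Assumption~\ref{ass:terminal_set} guarantees that the trajectory generated by $\tilde\mu$ starting from the successor state $f(x,v)$ stays in $\mc{Y}$ throughout and terminates inside $\mc{T}$, so $\tilde\mu$ is feasible for \eqref{eq:LMPC_OCP} at $(f(x,v),v)$.

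The first two bulleted conclusions follow by induction on $\ell$. The base case $\ell=0$ is the hypothesis $(x,v)\in\Gamma_N$; the induction step is exactly the feasibility of $\tilde\mu$ just constructed, which certifies $(\phi(\ell+1,x,v),v)\in\Gamma_N$. Safety $y_\ell\in\mc{Y}$ is then immediate from the stage constraint \eqref{eq:ocp_cstr2} evaluated at $i=0$, since the applied input is $\kappa(x_\ell,v)=\mu^\star_0(x_\ell,v)$.

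For the convergence statement, evaluating the cost of $\tilde\mu$ and invoking \eqref{eq:lyapunov} of Assumption~\ref{ass:terminal_cost} (using $\kappa_N(\xi^\star_N,v)-\bar u_v=-K(\xi^\star_N-\bar x_v)$ together with $\bar x_v=A\bar x_v+B\bar u_v$, so that the one-step terminal state error equals $(A-BK)(\xi^\star_N-\bar x_v)$) produces the familiar descent bound
\begin{equation*}
V(f(x,v),v)\leq V(x,v)-\|x-\bar x_v\|_Q^2 - \|\kappa(x,v)-\bar u_v\|_R^2.
\end{equation*}
Summing along the closed loop and using $V\geq 0$ makes the stage costs square-summable, so $\kappa(x_\ell,v)\to\bar u_v$ because $R\succ 0$; a standard linear input-to-state argument that uses observability of $(A,Q)$ from Assumption~\ref{ass:DARE} applied to the error dynamics $\delta x_{\ell+1}=A\delta x_\ell+B\delta u_\ell$ then yields $x_\ell\to\bar x_v$.

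The local asymptotic-stability claim under $v\in\Int\mc{V}_0$ is where the main obstacle lies. The descent inequality above and the trivial lower bound $V(x,v)\geq \|x-\bar x_v\|_Q^2$ supply every ingredient of the discrete-time Lyapunov theorem except a matching $\mc{K}_\infty$ upper bound near $\bar x_v$. To obtain one, I would exploit that $v\in\Int\mc{V}_0$ places $(\bar x_v,v)$ strictly inside every pointwise-in-time constraint and, since $\bar x_v$ is a fixed point of the terminal closed-loop map and $\mc{T}$ is invariant by Assumption~\ref{ass:terminal_set}, also strictly inside $\mc{T}$. By continuity, the LQR tail $\mu_i=\bar u_v-K(\xi_i-\bar x_v)$ is feasible for all $x$ in a sufficiently small ball around $\bar x_v$, and a direct calculation bounds its cost quadratically in $\|x-\bar x_v\|$, giving the required upper bound. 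Combined with the descent inequality, this yields local asymptotic stability of $\bar x_v$ via the standard discrete-time Lyapunov theorem, completing the proof.
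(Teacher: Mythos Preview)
Your argument is the classical receding-horizon stability proof and is essentially correct. The paper does not actually give its own proof of this theorem; it simply invokes \cite[Theorem~4.4.2]{goodwin2006constrained}, whose argument proceeds exactly along the lines you sketch (shifted feasible sequence for recursive feasibility, optimal-value decrease, observability of $(A,Q)$ to upgrade the $Q$-seminorm bounds to genuine state bounds). So your route and the paper's coincide, except that you spell out what the citation hides.

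One step deserves a second look. In the asymptotic-stability part you write that ``$\bar x_v$ is a fixed point of the terminal closed-loop map and $\mc{T}$ is invariant by Assumption~\ref{ass:terminal_set}'' and conclude $(\bar x_v,v)\in\Int\mc{T}$. Invariance together with the equilibrium being a fixed point does not by itself force interior membership; Assumption~\ref{ass:terminal_set} permits, in principle, a terminal set that touches the equilibrium manifold only on its boundary. What actually delivers $(\bar x_v,v)\in\Int\mc{T}$, and hence the local quadratic upper bound on $V$ via the LQR tail that you want, is the extra hypothesis $v\in\Int\mc{V}_0$ combined with the concrete construction $\mc{T}=\tilde O_\infty^\epsilon$ of Appendix~A, for which strictly steady-state admissible equilibria lie in the interior by the Gilbert--Tan theory. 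The textbook result being cited implicitly carries such an assumption. This is a slip in one sentence of justification, not in the overall strategy; once you replace that sentence with the correct reason, your proof goes through.
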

\begin{proof}
Since the auxiliary reference $v$ is constant for  $\ell\geq 0$, the statement follows from \cite[Theorem 4.4.2]{goodwin2006constrained}.
\end{proof}

Theorem \ref{thm:MPC_stab} achieves the control objectives given $v_0$ such that $G_zv_0=r$ and $x_0$ satisfying $(x_0,v_0)\in\Gamma_N$. Its main limitation, however, lies in the fact that the OCP \eqref{eq:LMPC_OCP} is infeasible if $x_0$ cannot be steered to $\mathcal{X}(v_0)$ within $N$ steps. Although increasing the prediction horizon may seem like a suitable workaround, this solution may be inapplicable in practice since the computational time required to solve \eqref{eq:LMPC_OCP} scales unfavorably with $N$. 

In the next section, we describe an \emph{add-on} unit that expands the closed-loop domain of attraction without extending the prediction horizon or modifying the MPC formulation.

\section{The Feasibility Governor}  \label{ss:fg_design}
The MPC feedback policy \eqref{eq:MPC_feedback} is stabilizing only if the terminal set associated with the target equilibrium is $N$-step reachable from the current state. Intuitively, if the target can be manipulated, this limitation can be overcome by selecting a sequence of intermediate targets that are pair-wise reachable. This paper formalizes this idea by redefining the auxiliary reference $v$ as a time-varying signal $v_k$ to ensure $(x_k,v_k)\in\Gamma_N,~\forall k\in\ints$ and $G_zv_{k}=r$ for sufficiently large $k\in\ints$. The resulting control architecture is displayed in Figure~\ref{fig:fg_block_diagram}.

\subsection{Governor Design}
The idea behind the FG is straightfoward, minimally modify the reference so that the MPC problem remains feasible. Drawing inspiration from the command governor (CG) literature \cite{bemporad1997nonlinear}, the action of the FG can be computed via the following optimization problem.
\begin{subequations} \label{eq:FG_1}
\begin{alignat}{2}
	\min_{v\in \mc{V}_\epsilon}& \quad && \|G_z v - r\|_2^2 \label{eq:FG1_cost} \\
	\mathrm{s.t.}~&&& (x,v) \in \Gamma_N.
\end{alignat}
\end{subequations}
At time $k$, given a measurement $x_k$, the FG computes a virtual reference $v_k$ as a solution to \eqref{eq:FG_1} with $x = x_k$ that is passed to the MPC controller to obtain a control action $u_k = \kappa(x_k,v_k)$. 

The FG can be considered an extension of the CG and operates on the same principle: manipulate the auxiliary reference to remain within a safe invariant set associated with an underlying primary controller. In the case of the CG, the invariant sets are typically slices of $O_\infty$, the maximum constraint admissible set \cite{gilbert1991linear} associated with a linear feedback law such as \eqref{eq:terminal_control_law}. In contrast, the FG uses slices of $\Gamma_N$ which are invariant under the nonlinear MPC feedback \eqref{eq:MPC_feedback}. Assuming the common choice $\mc{T} = O_\infty$, the set $\Gamma_N$ is a superset of $O_\infty$ and grows larger as $N$ increases, as illustrated in Figures~\ref{fig:gamma_vs_T} and \ref{fig:DOA_vs_N}. The use of a more permissive constraint set leads to better performance, as the MPC controller is ``aware'' of the constraints, which is not possible using linear feedback.

Unfortunately, if $G_z$ does not have full column rank then $\|G_z v - r\|_2^2$ is not strongly convex and \eqref{eq:FG_1} will not have a unique minimizer. This is problematic from a convergence perspective; a mechanism for resolving degeneracies is needed. As such, we extend \eqref{eq:FG_1} and define the FG feedback as
\begin{equation} \label{eq:fg_opt_problem}
	g(x,r) \defeq \argmin_{v\in \mc{V}_\epsilon}~\left \{\psi(v,r)~|~(x,v) \in \Gamma_N\right\}
\end{equation}
where
\begin{equation} \label{eq:psi_def}
	\psi(v,r) \defeq \begin{cases}
	\|G_z v - r\|_2^2 & \text{if $G_z$ is injective} \\
	\|v - v^*_r\|_2^2 & \text{otherwise},
	\end{cases}
\end{equation}
and the designer can select any $v^*_r$ satisfying
\begin{equation} \label{eq:vstar}
	v^*_r \in \mc{V}^*_r \defeq \argmin_{v \in \mc{V}_\epsilon}~\|G_z v - r\|_2^2.
\end{equation}
Note that $\psi$ defined in this way is strongly convex is $v$.

The resulting feedback law is $u_k = \kappa(x_k,g(x_k,r))$ and the closed-loop system dynamics are
\begin{equation}
	x_{k+1} = Ax_k + B\kappa(x_k,g(x_k,r)).
\end{equation} 
Since the function $\psi$ is strongly convex, \eqref{eq:fg_opt_problem} is a convex quadratic program (with a unique solution) that can be solved reliably online.
\begin{rmk}
If the reference is achievable, i.e., $r\in \mc{R}_\epsilon$, then $G_z^\dagger r \in \mc{V}_r^*$, where $G_z^\dagger$ is the Moore-Penrose pseudo-inverse.
\end{rmk}

\subsection{Properties}

When combined with \eqref{eq:MPC_feedback} and placed in closed-loop with \eqref{eq:LTI_system}, the combined FG + MPC feedback policy ensures constraint satisfaction, renders the point $(x^*_r,v^*_r) = (G_x v^*_r,v^*_r)$ asymptotically stable, and exhibits finite time convergence of $v_k \to v^*_r$. These results are rigorously formulated and proven in Section V.

Moreover, the addition of the FG expands the domain of attraction of the closed-loop system from $\mc{D}_{MPC} = S_x(\Gamma_N,v^*_r)$, the set of states from which it is possible to reach $\mc{X}(v^*_r)$ in $N$-steps, to
\begin{equation}
	\mc{D}_{FG} = \bigcup_{v\in \mc{V}_\epsilon} S_x(\Gamma_N,v),
\end{equation}
the set of states from which it is possible to reach $\mc{X}(v)$ of \textit{any} $v\in \mc{V}_\epsilon$ in $N$-steps. In particular, the addition of the FG guarantees safe transitions between any $r_1,r_2 \in \mc{R}_\epsilon$. The differences between $\mc{D}_{MPC}$ and $\mc{D}_{FG}$ are illustrated in Figure~\ref{fig:2D_traj} for the double integrator example in Section~\ref{ss:double_integrator}.

\begin{rmk}
The FG can be applied to systems with disturbances by noting that the essential property required by the FG is that the feasible set $\Gamma_N$ of the model predictive controller is forward invariant for any constant auxiliary reference. We can readily replace the MPC formulation \eqref{eq:LMPC_OCP} with any alternative OCP with a forward invariant feasible set. For example, the tube MPC formulation \cite[Algorithm 3.1]{kouvaritakis2016model}, based on the theory of Robustly Positive Invariant Sets \cite{kolmanovsky1998theory}, would be a valid choice as it renders its feasible set disturbance invariant \cite[Theorem 3.2]{kouvaritakis2016model}.
\end{rmk}

\begin{figure}[h]
	\centering
	\includegraphics[width = \columnwidth]{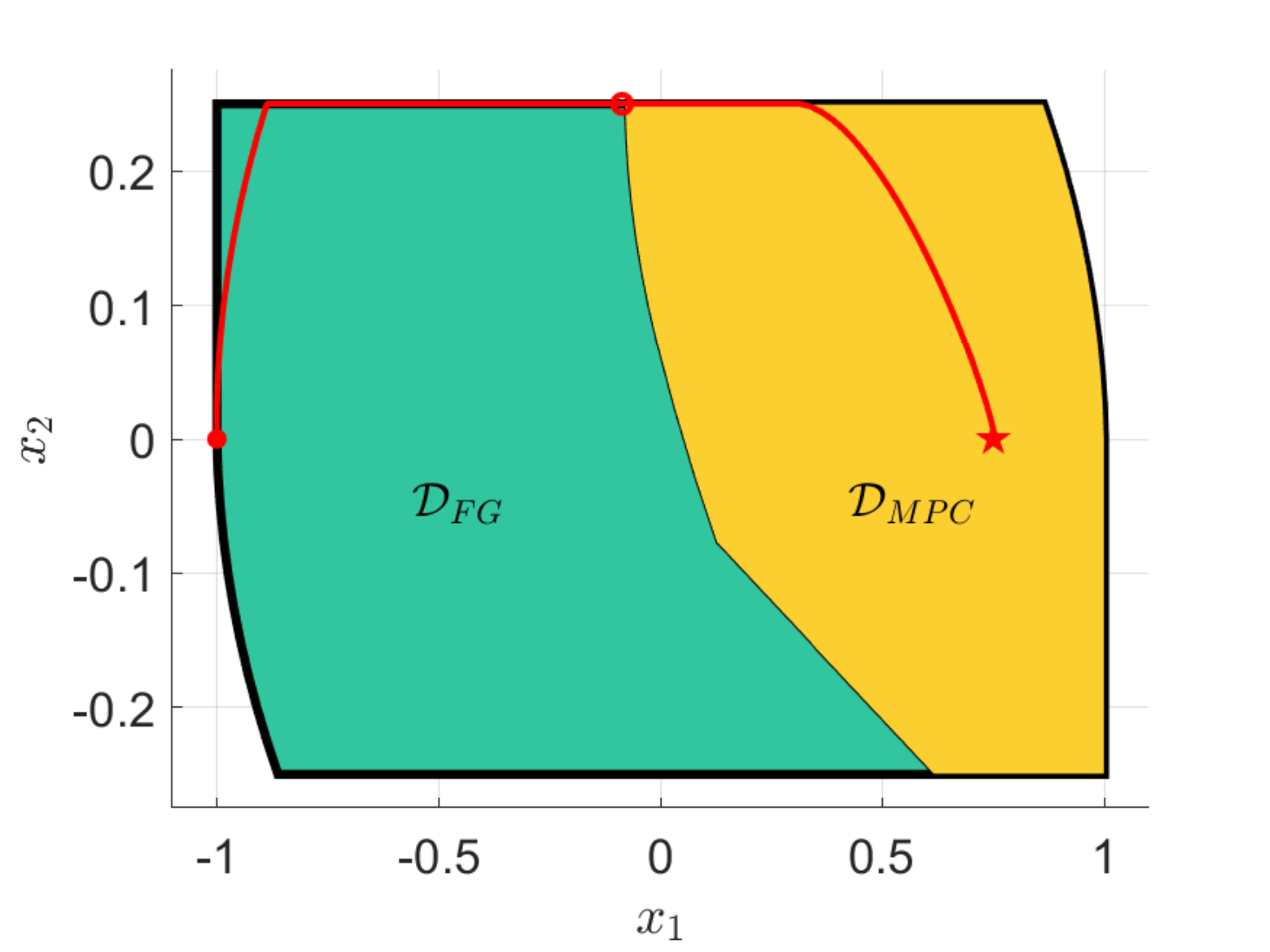}
	\caption{For the double integrator example in Section~\ref{ss:double_integrator}, the region of attraction of the combined MPC + FG feedback law (green) is larger than that of the MPC controller alone (yellow).}
	\label{fig:2D_traj}
\end{figure}

\subsection{Implementation} \label{ss:feasible_set_computation}
In our problem setting, $\Gamma_N$ and $\mc{V}_\epsilon$ are polyhedral and thus \eqref{eq:fg_opt_problem} is a strongly convex quadratic program (QP). Convex QPs can be solved efficiently and reliably using active set, interior point, proximal gradient, or generalized Newton methods. The problem \eqref{eq:fg_opt_problem} typically has only a small number of variables and many constraints. For example, the lateral vehicle example in Section~\ref{ss:lateral_vehicle} has $1$ variable and around $6000$ inequality constraints. Dual active-set methods \cite{goldfarb1983numerically} can solve the FG problems efficiently and reliably; they start from the unconstrained optimum and only consider a limited number of active constraints at a time.

Implementation of the FG also requires a half-space representation of the feasible set. Two methods for obtaining one via polyhedral calculus are described below. 
\subsubsection{Block Method}
The MPC OCP \eqref{eq:LMPC_OCP} is a QP and can be written in the condensed form\footnote{Expressions for the matrices in \eqref{eq:Condensed_LMPC_OCP} are provided in Appendix B.}
\begin{subequations} \label{eq:Condensed_LMPC_OCP}
\begin{alignat}{2} 
\underset{\mu}{\mathrm{min.}}&\quad \frac12&& \mu^T H \mu + \mu^T W\theta\\
\mathrm{s.t.}& && M \mu + L \theta \leq b,
\end{alignat}
\end{subequations}
with parameter $\theta = (x,v)$. The feasible set \eqref{eq:feasible_set} can therefore be expressed as
\begin{equation} \label{eq:gamma_block}
	\Gamma_N = \proj{\theta} \{(\mu,\theta)~|~ M \mu + L \theta \leq b\}.
\end{equation}

\subsubsection{Recursive Method}
The feasible set $\Gamma_N$ is the $N$-step backwards reachable set of $\mc{T}$ and can be computed recursively. Define the matrices
\begin{equation}
	A_e = \begin{bmatrix}
	A & 0\\
	0 & I
	\end{bmatrix}~~B_e = \begin{bmatrix}
		B \\0
	\end{bmatrix}, \text{ and } M_e = \begin{bmatrix}
		A_e & B_e
	\end{bmatrix},
\end{equation}
and the set $\mc{W} = \{(x,v,u)~|~C x + Du \in \mc{Y}\}$.
Then $\Gamma_N$ can be computed via the recursion
\begin{equation} \label{eq:gamma_recursive}
	\Gamma_{i+1} = \proj{\theta}\left(M_e^{-1}\Gamma_i \cap \mc{W}\right),
\end{equation}
starting from the initial condition $\Gamma_0 = \mc{T}$. \\

There are several toolboxes available for performing polyhedral calculus (e.g., projections, images, inverse images etc.). In this paper we use and compare the MPT3 \cite{herceg2013multi} and \texttt{bensolve tools} \cite{ciripoi2018calculus} packages.

For both the recursive and block methods, the complexity of computing $\Gamma_N$ is dominated by the projection operation. The projection is performed offline but can quickly become intractable, even for moderately sized systems, as all known projection algorithms suffer from the curse of dimensionality \cite{huynh1992practical}. Thus computing $\Gamma_N$ can quickly become intractable as the size of the state vector, input vector, reference, or prediction horizon grows. 

In this paper, we investigate two different methods for computing the projections, multi-objective linear programming (MOLP) \cite{lohne2016equivalence}, implemented in \texttt{bensolve tools} \cite{ciripoi2018calculus}, and Fourier-Motzkin elimination, implemented in the MPT3 toolbox \cite{herceg2013multi}. We computed $\Gamma_N$ for several values of $N$ for the double integrator (Section~\ref{ss:double_integrator}) and lateral vehicle model (Section~\ref{ss:lateral_vehicle}) and recorded the execution (wall-clock) time on a 2019 Macbook Pro (2.8 GHz i9, 32GB RAM) running MATLAB 2019b. We observe that the MOLP method significantly outperforms Fourier elimination in term of both speed and reliability, as seen in Figure~\ref{fig:di_scaling}. Further, the recursive method is marginally faster than the block method, as illustrated in Figures~\ref{fig:di_scaling} and \ref{fig:lv_scaling}. Both methods display exponential scaling in the horizon length, as expected for polyhedral projection methods.

\begin{figure}[htbp]
	\centering
	\includegraphics[width=0.95\columnwidth]{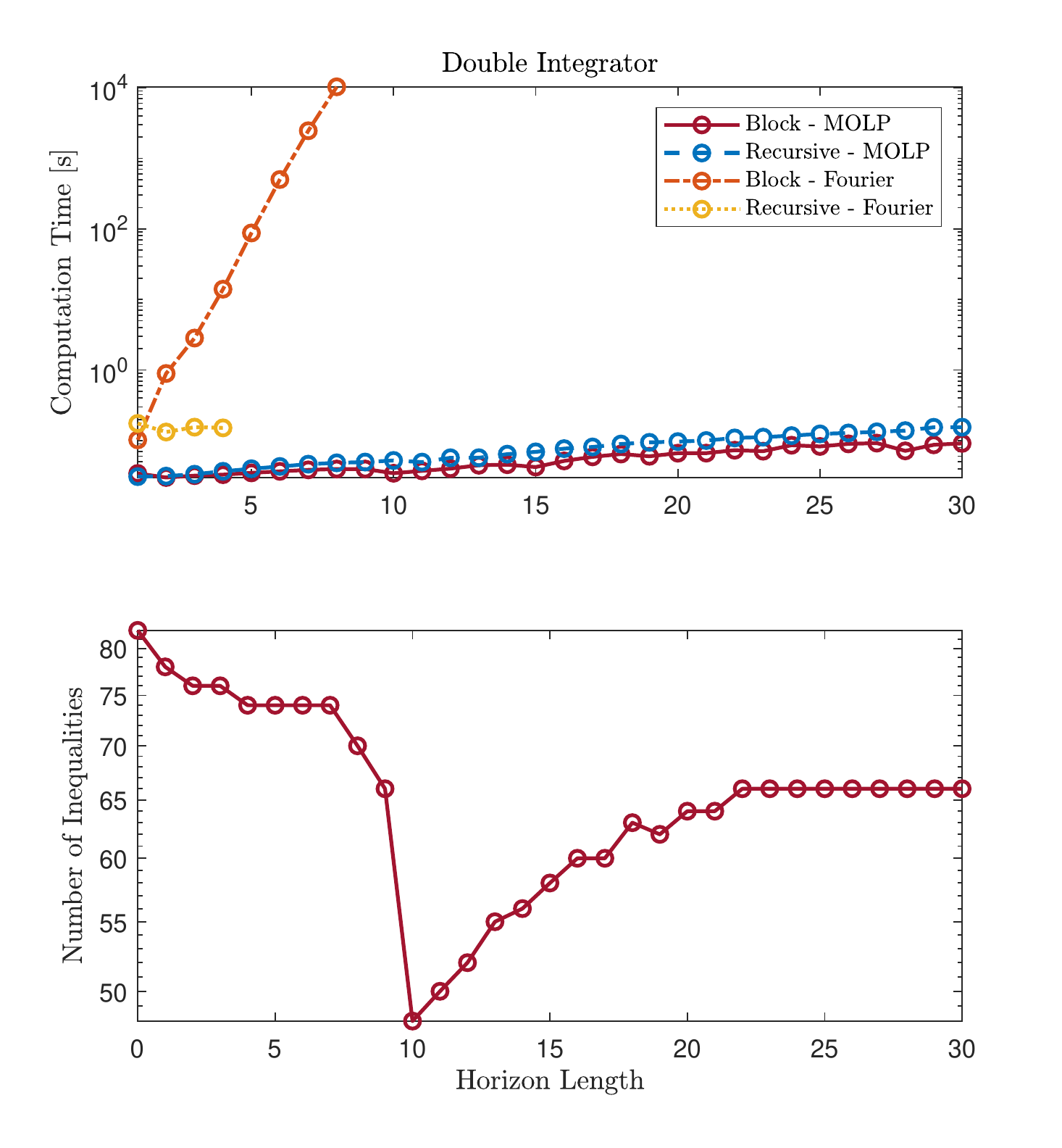}
	\caption{The computational cost of computing $\Gamma_N$. The MOLP projection algorithm outperforms Fourier elimination (0.16 seconds for $N = 30$ vs. 2.9 hours for $N = 8$) and the Fourier-recursive method fails for $N\geq 5$. The number of inequalities necessary to represent $\Gamma_N$ eventually converges, we hypothesize because $\Gamma_N$ must be contained in the state constraints which are a simple box.}
	\label{fig:di_scaling}
\end{figure}

\begin{figure}[htbp]
	\centering
	\includegraphics[width=0.95\columnwidth]{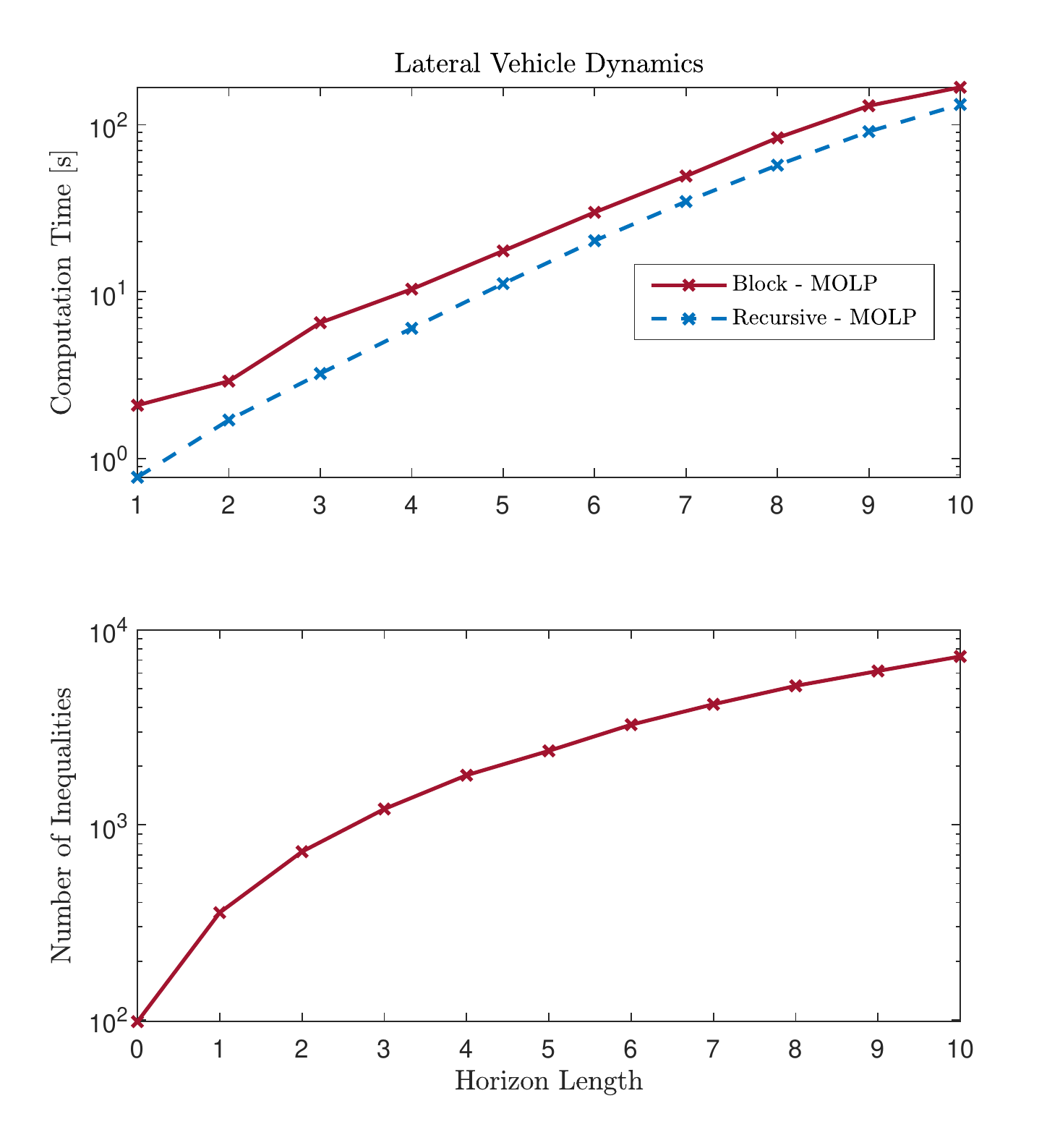}
	\caption{The computational cost of computing $\Gamma_N$. The recursive method marginally outperforms the block method with both displaying slow exponential growth.}
	\label{fig:lv_scaling}
\end{figure}

Our investigations confirm that projection based methods for computing $\Gamma_N$ are tractable only for moderately sized systems. One strategy for applying the FG to larger systems is to replace $\Gamma_N$ with an easier to compute approximation.

\subsection{Under-approximating the Feasible Set} \label{ss:under-approx}
In some scenarios, it may be advantageous (or necessary) to use an approximation of the feasible set. Luckily, with some minor modifications, a set $\mc{F} \subseteq \Gamma_N$ can be used in place of $\Gamma_N$. In this case, the FG is re-defined as follows
\begin{subequations}  \label{eq:fg_under_approx}
\begin{equation}
 	g(x,v,r) \defeq  \begin{cases}
 		\bar{g}(x,r) & \text{if } (x,v) \in \mc{F}\\
 		v & \text{else}
 	\end{cases}
 \end{equation} 
 where 
 \begin{equation}
 	\bar{g}(x,r) \defeq \argmin_{v\in \mc{V}_\epsilon}~\left\{\psi(v,r)~|~(x,v)\in \mc{F}\right\}.
 \end{equation}
 \end{subequations}
At time $k$ the auxiliary reference is then  computed as $v_{k} = g(x_k,v_{k-1},r)$.
The set $\mc{F}$ must satisfy the following:
\begin{ass}\label{ass:under_approx}
The set $\mc{F}\subseteq\Gamma_N$  is closed, convex, polyhedral, and satisfies $\Sigma \subset \Int \mc{F}$. 
\end{ass}

The idea behind \eqref{eq:fg_under_approx} is that, while the slices of $\mc{F}$ are not invariant like those of $\Gamma_N$, they are strongly returnable \cite{gilbert2002nonlinear} under Assumption~\ref{ass:under_approx}. That is, if $v$ remains constant, the state trajectories are guaranteed to eventually return to $\mc{F}$ due to the properties of the MPC feedback, so the FG simply holds $v$ constant in the meantime. This approach preserves the qualitative theoretical properties (convergence, safety etc.) of the closed-loop system but, unsurprisingly, results in the smaller domain of attraction
\begin{equation}
	\bar{\mc{D}}_{FG} = \bigcup_{v\in \mc{V}_\epsilon} S_x(\mc{F},v) \subseteq \mc{D}_{FG}.
\end{equation}
This smaller domain of attraction is still however large enough to guarantee safe transitions between any $r_1,r_2\in \mc{R}_\epsilon$.

An obvious way to generate the under-approximation is to pick $\mc{F} = \Gamma_i$ for $0\leq i < N$ with the limit case $\mc{F} = \Gamma_0 = \mc{T}$. The ability to use under-approximations also provides the flexibility to design $\mc{F}$ so as to limit the number of inequalities, for example by picking $\mc{F}$ as a box within $\Gamma_N$ or as the convex hull of a pre-specified number of points sampled from the boundary of $\Gamma_N$. This is especially important in embedded applications with memory limitations. Finally, obtaining under-approximations of $\Gamma_N$ through parallelizable approaches, such as sampling based algorithms, is likely key for enabling the application of the FG to higher dimensional systems and an important direction for future work.

\section{Theoretical Analysis} \label{ss:theoretical_properties}
This section analyzes the properties of the closed-loop system under the combined FG and MPC feedback policy. We consider the case from Section~\ref{ss:under-approx} where the under-approximation $\mc{F}$ is used in place of $\Gamma_N$, the results in the nominal case follow by letting $\mc{F} = \Gamma_N$. 

The reference $r$ is assumed constant throughout this section, we suppress any dependencies on $r$ to simplify the notation.

The feasible and invariant sets of the FG are
\begin{gather}
	\Phi \defeq \mc{F} \cap (\reals^{n_x}\times \mc{V}_\epsilon),\\
	\Lambda \defeq \Gamma_N \cap (\reals^{n_x}\times \mc{V}_\epsilon).
\end{gather}
Using these sets, the action of the FG can be expressed as
\begin{subequations} \label{eq:g_def}
\begin{gather} 
	g(x,v) = 
	\begin{cases}
		\bar{g}(x) & (x,v)\in \Phi\\
		v & (x,v)\in \Lambda\setminus\Phi
	\end{cases}\\
	\bar{g}(x) = \argmin_{v\in S_v(\Phi,x)}~\psi(v,r), \label{eq:gbar_def}
\end{gather}
\end{subequations}
where $\psi$ is defined in \eqref{eq:psi_def}. Then the closed-loop dynamics of \eqref{eq:LTI_system} under the combined FG and MPC feedback law are
\begin{subequations} \label{eq:closed-loop-fg-system}
\begin{align} 
v_{k} &= g(x_k,v_{k-1})\\
x_{k+1} &= f(x_k,v_{k}),\\
y_k &= h(x_k,v_k)
\end{align}
\end{subequations}
where $f(x,v) = A x + B\kappa(x,v)$ is as defined in \eqref{eq:closed_loop_dynamics}, $h(x,v) \defeq Cx + D \kappa(x,v)$, and $\kappa$ is the MPC feedback law. The update equations can then be written compactly as
\begin{align}\label{eq:operator-closed-loop-fg}
    (x_{k+1},v_{k+1}) = T(x_k,v_k),
\end{align}
where $T(x, v) \defeq (f(x, v), g(f(x, v), v))$.

The continuity properties of \eqref{eq:closed-loop-fg-system} are as follows.


\begin{lmm} \label{lmm:g_f_continuity}
Given Assumptions~\ref{ass:stabilizable}--\ref{ass:terminal_cost}, the functions $f:\Gamma_N \to \reals^{n_x}$ in \eqref{eq:closed_loop_dynamics} and $\bar g:\Phi \to \mc{V}_\epsilon$ are Lipschitz continuous.
\end{lmm}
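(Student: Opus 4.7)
The plan is to recognize both $\kappa$ and $\bar g$ as minimizers of strongly convex parametric quadratic programs with linear parameter dependence in the constraints, and then invoke standard multi-parametric QP theory to obtain continuous piecewise affine structure, from which Lipschitz continuity follows immediately.

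For $f$: after condensing as in \eqref{eq:Condensed_LMPC_OCP}, the MPC problem \eqref{eq:LMPC_OCP} becomes a QP in $\mu$ with parameter $\theta=(x,v)$. The Hessian $H$ in the condensed cost is block-diagonal-dominant with $R\succ 0$ contributions (Assumption~\ref{ass:DARE}), so $H\succ 0$ and the QP is strongly convex; the constraints $M\mu + L\theta \leq b$ depend affinely on $\theta$. By the classical result on multiparametric QPs \cite{bemporad2002explicit}, the unique minimizer $\mu^\star(x,v)$ is a continuous piecewise affine function on $\Gamma_N$, partitioned into finitely many polyhedral critical regions on each of which $\mu^\star$ has the explicit affine form dictated by the KKT conditions of the corresponding active set. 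Since $\kappa(x,v)=\mu^\star_0(x,v)$ is a linear selection of these coordinates, and since $f(x,v)=Ax + B\kappa(x,v)$, the map $f$ is also continuous piecewise affine on $\Gamma_N$.

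For $\bar g$: the problem \eqref{eq:gbar_def} is, by construction of $\psi$ in \eqref{eq:psi_def}, a strongly convex QP in $v$ with parameter $x$ and constraints $(x,v)\in\mc{F}$, $v\in\mc{V}_\epsilon$ that are polyhedral and affine in $(x,v)$. Since $\mc{F}$ and $\mc{V}_\epsilon$ are polyhedral (Assumption~\ref{ass:under_approx} and \eqref{eq:Veps}), the same multiparametric QP result yields that $\bar g$ is continuous piecewise affine on its domain $\{x : S_v(\Phi,x)\neq\emptyset\}$, which is precisely $\proj{x}\Phi$.

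Finally, I would pass from continuous piecewise affine to Lipschitz. Because the explicit solution has only finitely many critical regions $\{P_i\}_{i=1}^m$ with affine pieces $z^\star(\theta)=A_i\theta + b_i$ on $P_i$, any segment $[\theta_1,\theta_2]$ in the domain crosses finitely many regions; continuity across shared faces together with the triangle inequality gives
\begin{equation}
\|z^\star(\theta_1)-z^\star(\theta_2)\|\leq \max_i\|A_i\|\,\|\theta_1-\theta_2\|,
\end{equation}
so the max operator norm is a global Lipschitz constant. Applying this to $\mu^\star$ (hence to $\kappa$ and to $f=Ax+B\kappa$) on $\Gamma_N$, and to $\bar g$ on $\Phi$, establishes the claim. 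The main subtlety to be careful about is verifying that the hypotheses for the multiparametric QP theorem are met on all of $\Gamma_N$ and $\Phi$ (in particular feasibility of the parametric QP, which holds by definition of $\Gamma_N$ and $\Phi$, and strong convexity of the Hessian, which follows from Assumptions~\ref{ass:DARE} and the construction of $\psi$); the remainder of the argument is routine once the piecewise affine structure is in hand.
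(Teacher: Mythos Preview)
Your proposal is correct and takes essentially the same approach as the paper: both recognize $\kappa$ and $\bar g$ as solution mappings of strongly convex multiparametric QPs and invoke \cite{bemporad2002explicit} to conclude Lipschitz continuity. The paper's proof is simply terser, citing the Lipschitz result directly rather than unpacking the piecewise-affine structure as you do; your additional detail (checking $H\succ0$ via $R\succ0$ and deriving the Lipschitz constant from the finitely many affine pieces) is sound and makes the argument more self-contained.
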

\begin{proof}
The MPC feedback policy $\kappa$ and $\bar g$ are solution mappings of strongly convex multi-parametric quadratic programs \eqref{eq:Condensed_LMPC_OCP} and \eqref{eq:gbar_def} and are thus Lipschitz continuous \cite[Theorem 4]{bemporad2002explicit}. Lipschitz continuity of $f$ follows immediately.
\end{proof}

Lipschitz continuity of $\bar g$ is used only when invoking LaSalle's theorem to prove asymptotic stability. As such, the assumption that $\mc{F}$ is polyhedral can be removed, provided continuity of $\bar g$ is maintained. Specifically, if the mapping $S_v(\mc{F},x)$ is continuous in the Pompeiu–Hausdorff sense\footnote{See \cite[Section 3B]{dontchev2009implicit} for a definition.} continuity of $\bar g$ can be proven. 

\subsection{Safety and Recursive Feasibility}
The following theorem provides sufficient conditions under which the (FG) achieves the \textit{Safety} objective and proves that the set $\Lambda$ is forward invariant.

\begin{thm}[Safety \& Invariance]\label{thm:rec_feas}
Given Assumptions~\ref{ass:stabilizable}--\ref{ass:terminal_set}, consider the closed-loop dynamics \eqref{eq:closed-loop-fg-system}. Suppose $x_0 \in \proj{x}\Phi$, then the sequence $\{(x_k,v_k)\}_{k=0}^\infty \subseteq \Lambda$ is well defined and $y_k \in \mc{Y}$ for all $k\in \ints$.
\end{thm}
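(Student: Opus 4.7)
The plan is a proof by induction on the time index $k$, with the inductive invariant being $(x_k,v_k)\in\Lambda$. Given this invariant, constraint satisfaction $y_k\in\mc{Y}$ and one-step feasibility $(x_{k+1},v_k)\in\Gamma_N$ both follow directly from Theorem~\ref{thm:MPC_stab}, after which checking the two branches of the definition \eqref{eq:g_def} of $g$ shows that $v_{k+1}$ is well defined and $(x_{k+1},v_{k+1})\in\Lambda$.

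For the base case, I would unpack the hypothesis $x_0\in\proj{x}\Phi$: by definition of projection, $S_v(\Phi,x_0)\neq\emptyset$, so the optimization problem \eqref{eq:gbar_def} defining $\bar g(x_0)$ has a nonempty feasible set. Since $\psi$ is strongly convex and the feasible set is closed and convex (being the slice of the closed convex polyhedron $\Phi$), $\bar g(x_0)$ exists uniquely. I would take $v_0 = \bar g(x_0)$ as the first auxiliary reference, noting $(x_0,v_0)\in\Phi\subseteq\Lambda$ and $v_0\in\mc{V}_\epsilon$, so the invariant holds at $k=0$.

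For the inductive step, assume $(x_k,v_k)\in\Lambda$, i.e., $(x_k,v_k)\in\Gamma_N$ with $v_k\in\mc{V}_\epsilon$. Since $v_k$ is held constant between sampling instants, Theorem~\ref{thm:MPC_stab} applied to the closed-loop map $f(\cdot,v_k)$ from the feasible point $(x_k,v_k)$ gives (i) $(x_{k+1},v_k)=(f(x_k,v_k),v_k)\in\Gamma_N$ and (ii) $y_k = h(x_k,v_k)\in\mc{Y}$. Combining (i) with $v_k\in\mc{V}_\epsilon$ gives $(x_{k+1},v_k)\in\Lambda$. Now I split on the two branches of \eqref{eq:g_def}: if $(x_{k+1},v_k)\in\Phi$, then $S_v(\Phi,x_{k+1})$ contains $v_k$ and is therefore nonempty, so $\bar g(x_{k+1})$ is well defined and $(x_{k+1},\bar g(x_{k+1}))\in\Phi\subseteq\Lambda$; otherwise $(x_{k+1},v_k)\in\Lambda\setminus\Phi$ and $g(x_{k+1},v_k)=v_k$ by definition, yielding $(x_{k+1},v_{k+1})=(x_{k+1},v_k)\in\Lambda$. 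Either way, $v_{k+1}\in\mc{V}_\epsilon$ and the invariant is preserved, closing the induction.

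The only genuine subtlety is that one has to be careful about the very first sampling instant, because the hold-branch of $g$ nominally requires a previous reference $v_{-1}$; the hypothesis $x_0\in\proj{x}\Phi$ is exactly what avoids this issue by guaranteeing that the $\bar g$-branch is applicable at $k=0$. Beyond that, the argument is a routine invariance/induction argument: all the heavy lifting (recursive feasibility of the MPC slice under constant $v$, and admissibility of $y$) is delegated to Theorem~\ref{thm:MPC_stab}, while the FG contributes only the bookkeeping that $v_k$ remains in $\mc{V}_\epsilon$ and that $(x_k,v_k)$ never leaves $\Lambda$.
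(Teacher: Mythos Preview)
Your proposal is correct and follows essentially the same approach as the paper: an induction on $k$ with invariant $(x_k,v_k)\in\Lambda$, handling the base case via $x_0\in\proj{x}\Phi\Rightarrow(x_0,\bar g(x_0))\in\Phi$, and in the inductive step splitting on whether $(x_{k+1},v_k)\in\Phi$ or $(x_{k+1},v_k)\in\Lambda\setminus\Phi$, with recursive feasibility and constraint satisfaction delegated to Theorem~\ref{thm:MPC_stab}. Your write-up is, if anything, slightly more explicit about the well-posedness of $\bar g$ and the role of the hypothesis at $k=0$.
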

\begin{proof}
The proof is by induction. At time $k = 0$ if $x_0\in \proj{x}\Phi$ then \eqref{eq:gbar_def} is feasible and $(x_0,v_0)\in \Phi \subseteq \Lambda$. Next, assume $(x_k,v_k)\in \Lambda$, the functions $f$ and $g$ are both defined on $\Lambda$ and thus $(x_{k+1},v_{k+1})$ is well defined. If $(x_{k+1},v_k) \in \Phi$ then $S_v(\Phi,x_{k+1}) \neq \emptyset$, i.e., \eqref{eq:gbar_def} is feasible, and $v_{k+1} = g(x_{k+1},v_k) = \bar{g}(x_{k+1})  \in S_v(\Phi,x_{k+1})$, and thus $(x_{k+1},v_{k+1}) \in \Phi \subseteq \Lambda$. Otherwise, if $(x_{k+1},v_k) \notin \Phi$, \eqref{eq:g_def} yields that $v_{k+1} = v_k$ and thus $(x_{k+1},v_{k+1}) = (x_{k+1},v_k)\in \Lambda$ (by Theorem~\ref{thm:MPC_stab}). Therefore, by induction, $(x_k,v_k)\in \Lambda \subset \Gamma_N$ for all $k \in \ints$ which implies that $\forall k\in \ints$ $y_k \in \mc{Y}$ (by Theorem~\ref{thm:MPC_stab}) and that the sequence $\{(x_k,v_k)\}_{k=0}^\infty$  is well-defined.
\end{proof}

\subsection{Convergence and Stability}
Having established safety, we now consider convergence and stability. We begin by introducing the Lyapunov function candidate
\begin{equation} \label{eq:lyapunov_injective}
	V(v) \defeq \psi(v,r) \geq 0,
\end{equation}
with $\psi$ defined in \eqref{eq:psi_def} and the notation $V_k = V(v_k)$ and $V^* = V(v^*)$ where $v^*$ is defined in \eqref{eq:vstar}. The following Lemma addresses how $V$ evolves along solutions of \eqref{eq:closed-loop-fg-system}.

\begin{lmm} \label{lmm:lyapunov_decrease_estimate}
Given Assumptions~\ref{ass:stabilizable}--\ref{ass:terminal_set}, define the increment
\begin{equation}
	\Delta V(x,v) = V(g(f(x,v),v)) - V(v),
\end{equation}
then for all $(x,v)\in \Lambda$, there exists $\eta > 0$ such that
\begin{equation} \label{eq:lyap_dec_1}
	\Delta V(x,v) \leq -\eta \|g(f(x,v),v)-v\|^2.
\end{equation}
\end{lmm}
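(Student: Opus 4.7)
The plan is to exploit strong convexity of the objective $\psi(\cdot, r)$ together with the first-order optimality condition for the constrained minimizer defining $\bar g$. First I would note that, by the definition \eqref{eq:psi_def}, $\psi(\cdot, r)$ is strongly convex in $v$: when $G_z$ is injective, its Hessian is $2 G_z^T G_z \succ 0$; otherwise it is $2I$. In either case, there exists $\eta > 0$ (e.g., $\eta = \lambda_{\min}(G_z^T G_z)$ or $\eta = 1$, respectively) such that
\begin{equation*}
    \psi(w, r) \geq \psi(w', r) + \nabla_v \psi(w', r)^T (w - w') + \eta \|w - w'\|^2
\end{equation*}
for all $w, w'$. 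This $\eta$ is the constant that will appear in \eqref{eq:lyap_dec_1}.

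Next I would split the analysis according to the two branches of the definition \eqref{eq:g_def} of $g$. Write $x^+ = f(x,v)$ and $v^+ = g(x^+, v)$. By Theorem~\ref{thm:MPC_stab}, $(x^+, v) \in \Gamma_N$, and since $v \in \mc{V}_\epsilon$ remains unchanged, $(x^+, v) \in \Lambda$, so the update is well-defined. If $(x^+, v) \in \Lambda \setminus \Phi$, then \eqref{eq:g_def} gives $v^+ = v$, whence $\Delta V(x,v) = 0$ and $\|v^+ - v\| = 0$, and \eqref{eq:lyap_dec_1} holds trivially.

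The nontrivial case is $(x^+, v) \in \Phi$, so that $v^+ = \bar g(x^+)$. The key observation is that, since $(x^+, v) \in \Phi$, by definition of the slice we have $v \in S_v(\Phi, x^+)$; that is, $v$ is itself a feasible point for the convex program \eqref{eq:gbar_def} whose minimizer is $v^+$. Because this feasible set is convex (by Assumption~\ref{ass:under_approx}) and $\psi(\cdot, r)$ is differentiable, the standard first-order optimality condition gives
\begin{equation*}
    \nabla_v \psi(v^+, r)^T (v - v^+) \geq 0 .
\end{equation*}
Combining this inequality with the strong convexity bound applied at $w = v$, $w' = v^+$ yields $V(v) - V(v^+) \geq \eta \|v - v^+\|^2$, which is exactly \eqref{eq:lyap_dec_1}.

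The main obstacle is not analytical but one of verifying that the bookkeeping lines up correctly: ensuring that $v$ belongs to the feasible set of the minimization defining $v^+$ (so that the variational inequality applies), handling the case split induced by \eqref{eq:g_def} in a way that covers all $(x,v) \in \Lambda$, and identifying a single modulus $\eta > 0$ valid under both branches of \eqref{eq:psi_def}. Beyond this, the argument is a textbook consequence of strong convexity.
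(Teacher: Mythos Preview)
Your proposal is correct and follows essentially the same approach as the paper: split on whether $(x^+,v)\in\Phi$ or not, handle the trivial branch directly, and in the nontrivial branch combine the strong convexity inequality for $V=\psi(\cdot,r)$ with the first-order optimality condition for $\bar g(x^+)$, using that $v$ itself lies in $S_v(\Phi,x^+)$. Your write-up is arguably a bit more explicit than the paper's in identifying the modulus $\eta$ and in justifying the feasibility of $v$ for the inner minimization.
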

\begin{proof}
Partition the set $\Lambda$ into $\Lambda = \Lambda_1 \cup \Lambda_2$ where $\Lambda_1 = \{(x,v)~|~(f(x,v),v) \in \Lambda\setminus \Phi\}$ and $\Lambda_2 = \{(x,v)~|~(f(x,v),v) \in \Phi\}$. If $(x,v)\in \Lambda_1$ then $g(f(x,v),v) = v$ by \eqref{eq:g_def} and \eqref{eq:lyap_dec_1} clearly holds.

Next the case $(x,v)\in \Lambda_2$. Recall that $V$ is a strongly convex quadratic function. Thus, there exists $\eta > 0$ such that
\begin{equation} \label{eq:strong_convex}
	V(v) \geq V(v') + \nabla V(v')^T (v - v') + \eta\|v - v'\|^2
\end{equation}
for all $v',v \in \reals^{n_v}$. Letting $x^+ = f(x,v)$, we have that by \eqref{eq:g_def}, $g(x^+,v) = \bar g(x^+)$ for all $(x,v)\in \Lambda_2$. Moreover, recall that optimality conditions associated with $\bar g(x^+) = \argmin_{s\in S_v(\Phi,x^+)}~V(s)$ are \cite{dontchev2009implicit}
\begin{equation} \label{eq:opt_conditions}
	\nabla V(\bar g(x^+))^T(v - \bar g(x^+)) \geq 0,~~\forall v \in S_v(\Phi,x^+).
\end{equation}
Substituting $v' = \bar g(x^+)$ and \eqref{eq:opt_conditions} into \eqref{eq:strong_convex}, and rearranging, we obtain that, for all $(x,v)\in \Lambda_2$
\begin{equation*}
 	V(\bar g(f(x,v))) - V(v) \leq - \eta\|\bar g(f(x,v)) - v\|^2 \leq 0.
\end{equation*} 
Since $\Lambda = \Lambda_1\cup \Lambda_2$ this completes the proof.
\end{proof}
An immediate consequence is that $\{V_k\}$ is non-increasing.
\begin{cor} \label{lmm:lyapunov_decrement}
Consider \eqref{eq:closed-loop-fg-system}, under Assumptions~\ref{ass:stabilizable}--\ref{ass:under_approx}, if $x_0 \in \Pi_x \Phi$ then
\begin{equation} \label{eq:lyapunov_decrement}
	V(v_{k+1}) - V(v_k) \leq 0.
\end{equation}
\end{cor}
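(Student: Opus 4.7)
The plan is to observe that this corollary is essentially a direct consequence of Lemma~\ref{lmm:lyapunov_decrease_estimate} combined with Theorem~\ref{thm:rec_feas}. The statement concerns the evolution of $V$ along closed-loop trajectories, whereas Lemma~\ref{lmm:lyapunov_decrease_estimate} supplies a pointwise descent inequality valid for every $(x,v)\in\Lambda$. So the task reduces to justifying that the closed-loop trajectory stays inside $\Lambda$ and then identifying $v_{k+1}$ with $g(f(x_k,v_k),v_k)$.

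First, I would unpack the closed-loop dynamics. By \eqref{eq:closed-loop-fg-system}, the auxiliary reference at the next timestep is $v_{k+1} = g(x_{k+1},v_k) = g(f(x_k,v_k),v_k)$, which is exactly the quantity appearing in the definition of $\Delta V$ from Lemma~\ref{lmm:lyapunov_decrease_estimate}. Hence $V(v_{k+1}) - V(v_k) = \Delta V(x_k,v_k)$.

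Second, I would invoke Theorem~\ref{thm:rec_feas}: the hypothesis $x_0 \in \proj{x}\Phi$ is exactly the hypothesis of that theorem, so the entire sequence $\{(x_k,v_k)\}_{k=0}^{\infty}$ is well defined and contained in $\Lambda$. Consequently, Lemma~\ref{lmm:lyapunov_decrease_estimate} applies at every $(x_k,v_k)$, yielding
\begin{equation*}
V(v_{k+1}) - V(v_k) = \Delta V(x_k,v_k) \leq -\eta \|v_{k+1}-v_k\|^2 \leq 0,
\end{equation*}
which is the desired inequality.

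There is no substantive obstacle here; the work is entirely bookkeeping, and the only subtlety worth flagging is that the partition argument inside Lemma~\ref{lmm:lyapunov_decrease_estimate} already accounts for both the case when the FG update is triggered ($(x_{k+1},v_k)\in\Phi$) and the case when it holds $v$ fixed ($(x_{k+1},v_k)\in\Lambda\setminus\Phi$), so no additional case analysis is required at the corollary level.
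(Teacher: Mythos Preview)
Your proposal is correct and matches the paper's approach: the paper states the corollary without proof, treating it as an immediate consequence of Lemma~\ref{lmm:lyapunov_decrease_estimate}, and your write-up simply makes explicit the two ingredients (well-definedness and invariance from Theorem~\ref{thm:rec_feas}, then the pointwise descent from Lemma~\ref{lmm:lyapunov_decrease_estimate}) that justify this.
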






The next Lemma provides a sufficient condition under which the auxiliary reference changes.
\begin{lmm}\label{lmm:convergence}
Given Assumptions \ref{ass:stabilizable}--\ref{ass:under_approx}, define
\begin{equation}\label{eq:ss_tube}
    \mc{B}_\delta(\Sigma) \defeq \{(x,v)~|~v\in \mc{V}_\epsilon,~\|x- G_x v\| \leq\delta\},
\end{equation}
where $\Sigma = \mc{B}_0(\Sigma) = \{(x,v)~|~x = G_x v,~v\in \mc{V}_\epsilon\}$. Then, there exists $\delta^\star>0$ such that $\mc{B}_{\delta^\star}(\Sigma) \subset \Int \mc{F}$. Moreover, $\delta\in [0,\delta^\star]$, $(x,v)\in\mc{B}_{\delta}(\Sigma)$, and $v\neq v^*$ implies that $g(x,v) \neq v$.
\end{lmm}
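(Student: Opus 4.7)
The plan is to handle the two claims sequentially. The first is a compactness argument; the second is a contradiction based on strong convexity of $\psi$.

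First, to produce $\delta^\star$: the polyhedron $\mc{V}_\epsilon$ is compact under Assumption~\ref{ass:constraint_set}, so $\Sigma$ is compact as the continuous image of $\mc{V}_\epsilon$ under the map $v \mapsto (G_x v, v)$. By Assumption~\ref{ass:under_approx}, $\Sigma \subset \Int\mc{F}$, and the complement $\reals^{n_x+n_v}\setminus\Int\mc{F}$ is closed since $\mc{F}$ is polyhedral. Defining $\rho(v) \defeq \dist\bigl((G_x v, v),\, \reals^{n_x+n_v}\setminus\Int\mc{F}\bigr)$, the function $\rho$ is continuous and strictly positive on the compact set $\mc{V}_\epsilon$, so $\rho_0 = \min_{v\in\mc{V}_\epsilon}\rho(v) > 0$. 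Choosing $\delta^\star = \rho_0/2$ gives the first claim: for any $(x,v)\in\mc{B}_{\delta^\star}(\Sigma)$, the Euclidean distance from $(x,v)$ to $(G_xv,v)$ is $\|x-G_xv\| \leq \delta^\star < \rho(v)$, so $(x,v)\in\Int\mc{F}$.

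For the second claim, fix $(x,v)\in\mc{B}_\delta(\Sigma)$ with $\delta\in[0,\delta^\star]$ and $v\neq v^*$. By the first claim, $(x,v)\in\Int\mc{F}$, and since $v\in\mc{V}_\epsilon$ we have $(x,v)\in\Phi$, so by \eqref{eq:g_def} $g(x,v) = \bar g(x)$. I would suppose for contradiction that $\bar g(x) = v$ and consider the segment $v_t \defeq v + t(v^* - v)$ for $t\in[0,1]$. Convexity of $\mc{V}_\epsilon$ together with $v,v^*\in\mc{V}_\epsilon$ gives $v_t\in\mc{V}_\epsilon$ for all $t\in[0,1]$, and since $(x,v)\in\Int\mc{F}$ with $\mc{F}$ closed, there exists $\bar t>0$ with $(x,v_t)\in\mc{F}$, hence $v_t\in S_v(\Phi,x)$, for all $t\in[0,\bar t]$.

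The contradiction then follows from strong convexity of $\psi$: there exists $\eta > 0$ such that
\[
\psi(v_t,r) \leq (1-t)\psi(v,r) + t\psi(v^*,r) - \eta\, t(1-t)\|v^*-v\|^2
\]
for every $t\in[0,1]$. Since $v^*$ minimizes $\psi(\cdot,r)$ over $\mc{V}_\epsilon$ and $v\in\mc{V}_\epsilon$, we have $\psi(v^*,r)\leq\psi(v,r)$, which, combined with $v^*\neq v$, yields $\psi(v_t,r) < \psi(v,r)$ for every $t\in(0,\bar t)$. This contradicts the optimality of $v = \bar g(x)$ in \eqref{eq:gbar_def}, proving $g(x,v)\neq v$. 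The main subtlety will be to ensure that the perturbation $v_t$ simultaneously stays in $\mc{V}_\epsilon$ (handled by convexity) and in the $x$-slice of $\mc{F}$ (handled by $(x,v)\in\Int\mc{F}$, which is exactly where the uniform radius $\delta^\star$ from the first part enters).
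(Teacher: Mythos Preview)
Your proposal is correct and follows essentially the same approach as the paper: both establish a uniform tube $\mc{B}_{\delta^\star}(\Sigma)\subset\Int\mc{F}$ from Assumption~\ref{ass:under_approx}, then move along the segment $v_t = v + t(v^*-v)$ and use (strong) convexity of $\psi$ to exhibit a feasible point with strictly lower cost, forcing $g(x,v)\neq v$. Your treatment is slightly more explicit on the first claim (the paper simply asserts existence of $\delta^\star$, while you spell out the compactness/distance argument), and you phrase the second claim as a contradiction rather than a direct inequality chain, but these are cosmetic differences.
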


\begin{proof}
To show that $(x,v)\in \mc{B}_\delta(\Sigma) \land v\neq v^* \implies g(x,v) \neq v$ we will construct a point $v' \in S_v(\Phi,x)$ such that $V(v') < V(v)$. By Assumption~\ref{ass:under_approx}, $\Sigma\subset \Int \mc{F}$ and thus there exists $\delta^\star>0$ such that $\mc{B}_{\delta}(\Sigma)\subset \Int \mc{F}$ for all $\delta \in [0,\delta^*]$. Moreover, because $\mc{B}_{\delta}(\Sigma)\subset \Int\mc{F}$, for any $(x,v)\in \mc{B}_{\delta}(\Sigma)$, there exists $\alpha = \alpha(\delta) > 0$ such that $\mc{B}_\alpha(v) \subseteq S_v(\mc{F},x)$.

Fix any $\delta \in [0,\delta^*]$ and the corresponding $\alpha = \alpha(\delta)$. Then, define the set $\mc{C}_\alpha = \mc{V}_\epsilon \cap \mc{B}_\alpha(v)$, the ray
\begin{equation}
	v'(t) = v + t(v^* - v) \quad t\in [0,1],
\end{equation}
and assume $v\neq v^*$. The first step is to show that $t\in [0,\gamma] \implies v'(t)\in \mc{C}_\alpha$ where $\gamma = \min\left(1,\frac{\alpha}{\|v-v^*\|}\right)\in (0,1]$. To prove this, recall that $\mc{V}_\epsilon$ is convex and $v,v^* \in \mc{V}_\epsilon$ thus $v'(t) \in \mc{V}_\epsilon$ for $t\in [0,1]$. Moreover, $\|v'(\gamma) - v\| \leq \|v'(\alpha/\|v-v^*\|) - v)\| = \alpha$ and therefore $\forall t\in [0,\gamma],~v'(t)\in \mc{C}_\alpha$.

To establish that $V$ decreases along $v'(t)$, recall that $V$ is convex and therefore
\begin{align*}
	V(v'(t)) &= V((1-t)v + t v^*)\\
	         & \leq V(v) - t[V(v) - V^*]
\end{align*}
for all $v\in \mc{V}_\epsilon \setminus v^*$ and $t\in [0,1]$. Further, using that $V^* < V(v)$ for all $v\in \mc{V}_\epsilon \setminus v^*$ and that $\gamma \in (0,1]$ we conclude that
\begin{equation}  \label{eq:lmm31}
	V(v'(\gamma)) < V(v).
\end{equation}
Thus we have constructed a point $v'(\gamma)\in \mc{C}_\alpha \subseteq S_v(\Phi,x)$ satisfying $V(v'(\gamma)) < V(v)$, this implies that
\begin{equation}
	V(g(x,v)) = \min_{s\in S_v(\Phi,x)}~V(s) \leq V(v'(\gamma)) < V(v).
\end{equation}
Finally, strong convexity of $V$ combined with $V(g(x,v)) < V(v)$ implies that $g(x,v) \neq v$ as claimed.
\end{proof}


The next lemma extends Theorem~\ref{thm:MPC_stab} to the case where $v$ is changing.
\begin{lmm} \label{lmm:ISS}
Given Assumptions~\ref{ass:stabilizable}--\ref{ass:terminal_set}, and the system $x_{k+1} = f(x_k,v_k)$, the error signal $e_k = x_k - G_x v_k$ is input-to-state stable (ISS) \cite{jiang2001input} with respect to the input $\Delta v_k = v_{k+1} - v_k$, i.e., there exist $\beta\in \mc{KL}$ and $\gamma \in \mc{K}$ such that
\begin{equation*} \label{eq:ISS_equation}
	\|x_k - G_xv_k\|_Q \leq \beta(k,\|x_0 - G_x v_0\|) + \gamma\left(\sup_{j\geq 0}~\|\Delta v_j\|\right).
\end{equation*}
Moreover, $\gamma$ is an asymptotic gain, i.e.,
\begin{equation} \label{eq:AG_equation}
	\limsup_{k\to\infty}\|x_k - G_x v_k\|_Q \leq \gamma\left( \limsup_{k\to\infty} \|\Delta v_k\| \right).
\end{equation}
\end{lmm}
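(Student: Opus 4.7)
The plan is to verify ISS by exhibiting the MPC optimal-value function $J^\star(x,v)$ as an ISS-Lyapunov function for $x_{k+1} = f(x_k, v_k)$, with $\Delta v_k$ acting as the disturbance. The argument rests on three ingredients: a quadratic sandwich of $J^\star$ in the tracking error $e := x - G_x v$, the standard MPC descent at constant $v$, and Lipschitz continuity of $J^\star$ in $v$.

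Because \eqref{eq:Condensed_LMPC_OCP} is a strongly convex multi-parametric QP with parameter $(x,v)$ ranging over the compact polyhedron $\Gamma_N$, the value function $J^\star$ is continuous and piecewise quadratic \cite[Theorem 4]{bemporad2002explicit}, and therefore Lipschitz on $\Gamma_N$; this supplies a constant $L_v > 0$ with
\[
|J^\star(x,v^+) - J^\star(x,v)| \leq L_v \|v^+ - v\|
\]
for all $(x,v),(x,v^+) \in \Gamma_N$. The standard shifted-sequence argument terminated with \eqref{eq:terminal_control_law}, combined with Assumptions~\ref{ass:terminal_cost}--\ref{ass:terminal_set}, yields the descent
\[
J^\star(f(x,v),v) - J^\star(x,v) \leq -\|e\|_Q^2,
\]
and rolling out the same terminal law under Assumption~\ref{ass:DARE} ($(A,Q)$ observable, $R\succ 0$, so the Riccati terminal weight $P\succ 0$) produces
\[
\alpha_1\|e\|^2 \leq J^\star(x,v) \leq \alpha_2\|e\|^2, \qquad (x,v)\in\Gamma_N,
\]
for constants $\alpha_1,\alpha_2 > 0$.

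Setting $W_k := J^\star(x_k, v_k)$ and chaining these along a closed-loop trajectory (which stays in $\Lambda \subseteq \Gamma_N$ by Theorem~\ref{thm:rec_feas}, ensuring in particular that $(x_{k+1}, v_k) \in \Gamma_N$ so the Lipschitz bound applies at the $v$-update) yields the discrete ISS-Lyapunov inequality
\[
W_{k+1} \leq W_k - \|e_k\|_Q^2 + L_v\|\Delta v_k\|.
\]
Using the sandwich to convert this into a geometric contraction $W_{k+1} \leq \rho W_k + L_v \|\Delta v_k\|$ for some $\rho\in(0,1)$ and unrolling gives $W_k \leq \rho^k W_0 + L_v(1-\rho)^{-1}\sup_{j\geq 0}\|\Delta v_j\|$; passing through $\|e_k\|_Q^2 \leq \lambda_{\max}(Q)\alpha_1^{-1}W_k$ produces \eqref{eq:ISS_equation} with $\beta(k,s)\propto\sqrt{\rho^k}\, s$ and $\gamma(s)\propto\sqrt{s}$. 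The asymptotic-gain claim \eqref{eq:AG_equation} then follows by applying the ISS inequality from an arbitrary initial time $k_0$ and taking $\limsup_{k\to\infty}$ followed by $k_0\to\infty$, since the $\mc{KL}$ term vanishes in the limit.

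The main obstacle is handling the case in which $Q$ is only positive semidefinite: the first-stage cost $\|e\|_Q^2$ does not by itself control $\|e\|$, so both the lower sandwich and the geometric-contraction step require additional work. Observability of $(A,Q)$ (Assumption~\ref{ass:DARE}) is the hook: it guarantees $P\succ 0$ for the sandwich, and the contraction can be closed either by retaining the $\|u-G_u v\|_R^2$ term in the descent (positive definite since $R\succ 0$) or by aggregating $N$ consecutive stage costs into an observability-Gramian bound on $\|e\|^2$. A minor technical check is that the Lipschitz constant $L_v$ is truly uniform on all of $\Gamma_N$, which is delivered by the piecewise-affine structure of the parametric QP solution map combined with compactness of $\Gamma_N$.
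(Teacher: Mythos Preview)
Your proposal is correct and follows essentially the same route as the paper: both use the MPC optimal value function $J^\star$ as an ISS-Lyapunov function, combining the standard descent inequality at fixed $v$, a sandwich bound on $J^\star$ in the tracking error, and continuity of $J^\star$ in $v$ to absorb the reference update as a disturbance. The paper works at a slightly higher level of abstraction---it keeps the bounds as generic class-$\mc{K}$ functions, obtains uniform continuity of $J^\star$ by citing \cite{limon2009input}, and then invokes \cite[Lemmas~3.5 and~3.8]{jiang2001input} directly for the ISS and asymptotic-gain conclusions---whereas you make the quadratic/Lipschitz structure explicit, unroll the geometric recursion by hand, and obtain concrete forms for $\beta$ and $\gamma$. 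Your explicit treatment of the semidefinite-$Q$ case via observability is more detailed than the paper's, which hides this inside the class-$\mc{K}$ sandwich without comment.
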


\begin{proof}
Under Assumptions~\ref{ass:stabilizable}--\ref{ass:terminal_set}, it is well known, see e.g., \cite{mayne2000mpc}, that the optimal cost function of the MPC feedback law, i.e., \eqref{eq:ocp_cost} evaluated at the optimal solution $\mu^*(x,v)$ which we denote by $J:\Gamma_N \to \reals$, is a Lyapunov function for the closed-loop system, i.e., there exist $\alpha,\alpha_l,\alpha_u\in \mc{K}$ such that 
\begin{gather}
	J(f(x,v),v) - J(x,v) \leq -\alpha(\|x - G_x v\|_Q),\\
	\alpha_l(\|x - G_x v\|_Q) \leq J(x,v) \leq \alpha_u(\|x-G_x v\|_Q)
\end{gather}
for all $(x,v)\in \Gamma_N$. Moreover, under our assumptions, $J$ is uniformly continuous\cite[Prop 1]{limon2009input} and thus there exists $\sigma_x, \sigma_v \in \mc{K}_\infty$ such that $|J(x',v') - J(x,v)| \leq \sigma_x(\|x'-x\|) + \sigma_v(\|v'-v\|)$. Hence, for any $(x,v)\in\Lambda$, $v^+\in S_x(\Lambda,x)$ and $x^+ = f(x,v)$, let $\Delta J = J(x^+,v^+) - J(x,v)$ then
\begin{align}
	\Delta J &= J(x^+,v) - J(x,v) + J(x^+,v^+) - J(x^+,v)\\
	& \leq -\alpha(\|x - G_x v\|_Q) + |J(x^+,v^+) - J(x^+,v)|\\
	& \leq -\alpha(\|x - G_x v\|_Q) + \sigma_v(\|v^+ -v\|)
\end{align}
which demonstrates ISS of $e$ with respect to $\Delta v = v^+ - v$\cite[Lemma 3.5]{jiang2001input}. The existence of the asymptotic gain follows immediately from \cite[Lemma 3.8]{jiang2001input}.
\end{proof}


\begin{cor} \label{cor:continuity_fg}
Let Assumptions~\ref{ass:stabilizable}--\ref{ass:terminal_cost} hold, and let $T:\Lambda\to\Lambda$ be the operator defined in \eqref{eq:operator-closed-loop-fg}. Then $\tilde{T}\!: \Phi\to\Lambda$, the restriction of $T$ to $\Phi$, is continuous and can be expressed explicitly as $\tilde T(x, v) \defeq (f(x,v), \bar g(f(x,v)))$.
\end{cor}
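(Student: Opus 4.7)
The plan is to combine the Lipschitz continuity of $f$ and $\bar g$ established in Lemma~\ref{lmm:g_f_continuity} with the piecewise structure \eqref{eq:g_def} of $g$, collapsing $g(f(x,v),v)$ to $\bar g(f(x,v))$ on $\Phi$. Once that reduction is made, continuity of $\tilde T$ follows immediately by composition of continuous maps.

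First I would fix $(x,v)\in\Phi$ and expand $T(x,v)=(f(x,v),\,g(f(x,v),v))$ according to \eqref{eq:operator-closed-loop-fg}. The piecewise definition \eqref{eq:g_def} gives $g(f(x,v),v)=\bar g(f(x,v))$ exactly when $(f(x,v),v)\in\Phi$. By Theorem~\ref{thm:MPC_stab}, $(f(x,v),v)\in\Gamma_N$; since $v\in\mc{V}_\epsilon$ is unchanged, the pair lies in $\Lambda$. Whenever it additionally sits in $\Phi$, the desired collapse $g(f(x,v),v)=\bar g(f(x,v))$ holds, giving $T(x,v)=(f(x,v),\bar g(f(x,v)))=\tilde T(x,v)$.

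Continuity of $\tilde T$ then follows by composition of continuous maps. Lemma~\ref{lmm:g_f_continuity} provides Lipschitz continuity of $f:\Gamma_N\to\reals^{n_x}$ on $\Phi\subseteq\Gamma_N$ and of $\bar g$ on $\proj{x}\Phi$. Since $f$ maps points of $\Phi$ into $\proj{x}\Phi$ (so that $\bar g(f(x,v))$ is well-defined), the composite $(x,v)\mapsto\bar g(f(x,v))$ is Lipschitz, and hence continuous, on $\Phi$; stacking it componentwise with the continuous map $(x,v)\mapsto f(x,v)$ yields continuity of $\tilde T:\Phi\to\Lambda$.

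The main obstacle I foresee is the invariance step, namely verifying $(f(x,v),v)\in\Phi$ for every $(x,v)\in\Phi$ so that the ``hold'' branch of $g$ is never selected. In the nominal case $\mc{F}=\Gamma_N$ one has $\Phi=\Lambda$ and this follows directly from the forward invariance of $\Lambda$ given by Theorem~\ref{thm:MPC_stab}; under the proper under-approximation of Section~\ref{ss:under-approx}, however, $\Phi$ is only strongly returnable and the state may temporarily exit $\mc{F}$. There one would restrict attention to the ``FG-active'' subset of $\Phi$ (the analogue of $\Lambda_2$ appearing in the proof of Lemma~\ref{lmm:lyapunov_decrease_estimate}) or exploit further structure of $\mc{F}$. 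Resolving this branch-selection subtlety is the crux of the proof; once it is handled, the remaining composition-of-continuous-functions argument is routine.
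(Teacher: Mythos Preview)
Your approach is exactly the paper's: reduce $g(f(x,v),v)$ to $\bar g(f(x,v))$ on $\Phi$ via \eqref{eq:g_def}, then invoke Lemma~\ref{lmm:g_f_continuity} to conclude continuity by composition. The paper's proof is in fact even terser---it simply asserts ``by \eqref{eq:g_def}, for all $(x,v)\in\Phi$, $T(x,v)=(f(x,v),\bar g(f(x,v)))$'' and then cites Lemma~\ref{lmm:g_f_continuity}.

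The branch-selection issue you flag is genuine and the paper does \emph{not} address it. The reduction $g(f(x,v),v)=\bar g(f(x,v))$ requires $(f(x,v),v)\in\Phi$, i.e., one-step forward invariance of $\Phi$ under $f$ at fixed $v$. When $\mc F=\Gamma_N$ this follows from Theorem~\ref{thm:MPC_stab}, as you note; for a strict under-approximation $\mc F\subsetneq\Gamma_N$ nothing in Assumption~\ref{ass:under_approx} guarantees it, and the paper's one-line proof simply glosses over this. So you have correctly identified the only nontrivial step, and the ``gap'' you worry about is shared by the paper itself rather than being a defect of your argument relative to it. In the paper's later use of the corollary (Theorem~\ref{thm:asymptotic_convergence}), the trajectory has already been shown to lie in $\mc B_{\delta^\star}(\Sigma)\subset\Int\mc F$, which is where the reduction is actually needed; a cleaner formulation would state the corollary on that smaller set, or simply define $\tilde T$ by the explicit formula and drop the claim that it is literally the restriction of $T$.
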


\begin{proof}
Recall that, by \eqref{eq:g_def}, for all $(x,v)\in \Phi$ 
\begin{align*}
    T(x,v)  = (f(x,v), \bar g(f(x,v))) = \tilde T(x,v).
\end{align*}
Since $f$ and $\bar g$ are continuous by virtue of Lemma~\ref{lmm:g_f_continuity}, $\tilde T$ is also continuous.
\end{proof}

Having assembled the required components, we are ready to show asymptotic stability.
\begin{thm}[Asymptotic Stability] \label{thm:asymptotic_convergence}
Let Assumptions~\ref{ass:stabilizable}--\ref{ass:under_approx} hold. The point $(x^*, v^*)$, where $x^* = G_x v^*$, is an asymptotically stable equilibrium of \eqref{eq:closed-loop-fg-system} and $x_0 \in \Pi_x \Phi \implies \lim_{k\to\infty}~(x_k,v_k) = (x^*,v^*)$.
\end{thm}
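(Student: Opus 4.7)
The plan is to establish asymptotic stability by separately proving attractivity from $\proj{x}\Phi$ and Lyapunov stability near $(x^*,v^*)$, using the Lyapunov function $V$ together with the ISS property of $e_k = x_k - G_x v_k$ to couple the primary MPC feedback with the governor.

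For attractivity, I would first observe that Corollary~\ref{lmm:lyapunov_decrement} makes $\{V(v_k)\}$ non-increasing and bounded below by $V(v^*)$, hence convergent; telescoping Lemma~\ref{lmm:lyapunov_decrease_estimate} then yields $\sum_{k\geq 0}\|v_{k+1}-v_k\|^2 \leq [V(v_0)-V(v^*)]/\eta < \infty$, so $\Delta v_k \to 0$. The asymptotic-gain inequality \eqref{eq:AG_equation} in Lemma~\ref{lmm:ISS} then delivers $\|x_k - G_x v_k\|_Q \to 0$, which I would combine with the observability hypothesis in Assumption~\ref{ass:DARE} to conclude $e_k \to 0$. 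Consequently the trajectory eventually enters $\mc{B}_{\delta^\star}(\Sigma) \subset \Int \mc{F}$, so $(x_k,v_k) \in \Phi$ for all sufficiently large $k$. Since $\Lambda$ is compact, there is a convergent subsequence $(x_{k_j},v_{k_j}) \to (x_\infty,v_\infty)$; the conditions $e_{k_j} \to 0$ and $\Delta v_{k_j} \to 0$ force $x_\infty = G_x v_\infty$ and $v_{k_j+1}\to v_\infty$, so $(x_\infty,v_\infty) \in \Sigma \subset \Phi$. Continuity of $\tilde T$ on $\Phi$ (Corollary~\ref{cor:continuity_fg}) then yields $\tilde T(x_\infty,v_\infty) = (x_\infty,v_\infty)$, equivalently $\bar g(x_\infty) = v_\infty$. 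Lemma~\ref{lmm:convergence} applied at $\delta = 0$ forces $v_\infty = v^*$, since $v_\infty \neq v^*$ would give the contradiction $g(x_\infty,v_\infty) \neq v_\infty$. Every convergent subsequence of the bounded sequence $\{(x_k,v_k)\}$ therefore has the same limit $(x^*,v^*) = (G_x v^*,v^*)$, proving $(x_k,v_k) \to (x^*,v^*)$.

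For Lyapunov stability, I would introduce $W(v) \defeq V(v) - V(v^*) \geq 0$. Strong convexity of the quadratic $V$ with modulus $\eta$, combined with the first-order optimality condition $\nabla V(v^*)^T (v - v^*) \geq 0$ for $v \in \mc{V}_\epsilon$, yields the lower bound $\eta\|v-v^*\|^2 \leq W(v)$, while smoothness of $V$ on the compact set $\mc{V}_\epsilon$ gives an upper bound $W(v) \leq \alpha_W(\|v-v^*\|)$ for some $\alpha_W\in\mc{K}_\infty$. Monotonicity of $\{V(v_k)\}$ then delivers the uniform bounds $\|v_k-v^*\| \leq \sqrt{\alpha_W(\|v_0-v^*\|)/\eta}$ and $\|\Delta v_k\|\leq 2\sqrt{\alpha_W(\|v_0-v^*\|)/\eta}$ for all $k$, and $\|e_0\|\leq\|x_0-x^*\| + \|G_x\|\|v_0-v^*\|$ is small whenever $(x_0,v_0)$ is close to $(x^*,v^*)$. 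The ISS bound of Lemma~\ref{lmm:ISS} then produces a uniform class-$\mc{K}$ estimate on $\|e_k\|$ in terms of $\|(x_0,v_0)-(x^*,v^*)\|$, and combining with $\|x_k-x^*\| \leq \|G_x\|\|v_k-v^*\| + \|e_k\|$ yields the desired class-$\mc{K}$ bound on $\|(x_k,v_k)-(x^*,v^*)\|$. Together with the attractivity established above, this gives asymptotic stability of $(x^*,v^*)$.

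The hard part will be the LaSalle-style argument, because $T$ is discontinuous on $\partial\Phi$ and the standard invariance principle does not apply directly. The reason it still works is that $e_k \to 0$ drives the trajectory into $\Int\mc{F}$ asymptotically, where $T$ coincides with the continuous map $\tilde T$ of Corollary~\ref{cor:continuity_fg}; the summability of $\|\Delta v_k\|^2$ then turns every subsequential limit into a fixed point of $\tilde T$ lying in $\Sigma$, which Lemma~\ref{lmm:convergence} uniquely identifies as $(x^*,v^*)$.
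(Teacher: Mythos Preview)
Your argument is correct and closely parallels the paper's proof: both establish $\Delta v_k \to 0$ from convergence of $\{V_k\}$ together with Lemma~\ref{lmm:lyapunov_decrease_estimate}, deduce $e_k \to 0$ via the asymptotic-gain inequality of Lemma~\ref{lmm:ISS} and observability of $(A,Q)$, and conclude that the trajectory eventually lies in $\Phi$, where the continuous restriction $\tilde T$ of Corollary~\ref{cor:continuity_fg} governs the evolution. The difference is in how the limit is pinned down. The paper invokes LaSalle's invariance principle \cite{laSalle1976discrete} to show that the $\omega$-limit set lies in $\mc{M}\cap\Sigma$, and then argues $\mc{M}\cap\Sigma = \{(x^*,v^*)\}$ using Lemma~\ref{lmm:convergence}. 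You instead extract convergent subsequences directly, show each subsequential limit is a fixed point of $\tilde T$ lying in $\Sigma$ (using $e_{k_j}\to 0$, $\Delta v_{k_j}\to 0$, and continuity of $\tilde T$), and then appeal to Lemma~\ref{lmm:convergence} at $\delta=0$. Your route is essentially a by-hand unwinding of the LaSalle step and is somewhat more self-contained; the paper's route is shorter but relies on a cited theorem. Your Lyapunov-stability argument is substantially more detailed than the paper's one-line remark that stability ``follows from Corollary~\ref{lmm:lyapunov_decrement} and the ISS stability of the tracking error,'' and is correct.

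One minor gap: you assert ``$\Lambda$ is compact'' to extract the convergent subsequence, but compactness of $\Lambda$ is never established in the paper and need not hold in general (neither $\Gamma_N$ nor $\mc{V}_\epsilon$ is assumed bounded). The fix is immediate and already implicit in your argument: $\{v_k\}$ is bounded because $V(v_k)\leq V(v_0)$ and the strongly convex $V$ has bounded sublevel sets, and then $x_k = G_x v_k + e_k$ is bounded since you have already shown $e_k\to 0$.
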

\begin{proof}
First, note that, by Theorem~\ref{thm:rec_feas}, $x_0 \in \Pi_x \Phi$ guarantees that the sequence $\{(x_k,v_k)\}_{k = 0}^\infty \subseteq \Lambda$ is well defined. 
Moreover, the sequence $\{V_k\}_{k=0}^\infty$ is non-increasing (Corollary~\ref{lmm:lyapunov_decrement}) and bounded from below, hence converging. By virtue of Lemma~\ref{lmm:lyapunov_decrease_estimate}, we have that there exists $\eta > 0$ such that
\begin{align}
    \| v_{k+1} - v_k  \|^2 \leq \eta^{-1} | V_{k+1} - V_k | \to 0
\end{align}
as $k\to \infty$ and thus $\lim_{k\to\infty} ||\Delta v_k|| = 0$. Moreover, using Lemma~\ref{lmm:ISS}, there exists $\gamma\in \mc{K}$ such that
\begin{equation} 
	\limsup_{k\to\infty}\|x_k - G_x v_k\|_Q \leq \gamma\left( \limsup_{k\to\infty} \|\Delta v_k\| \right),
\end{equation}
together with the observability of $(A,Q)$, this implies that
\begin{equation} \label{eq:AG_2}
	\lim_{k\to\infty} \|x_k - G_x v_k\| = 0.
\end{equation}
Therefore, there exists $t \geq 0$ such that $\|x_k - G_x v_k\| \leq \delta^\star$ for all $k\geq t$ and thus $(x_k,v_k)\in \mc{B}_{\delta^\star}(\Sigma)$ for all $t\geq k$, where $\delta^\star$ and $\mc{B}_\delta(\Sigma)$ are defined in Lemma~\ref{lmm:convergence}.

By virtue of Lemma~\ref{lmm:convergence}, $\mc{B}_{\delta^*}(\Sigma) \subset \Int \mc F$ implying that $\mc{B}_{\delta^*}(\Sigma) \cap (\reals^n \times \mc V_\epsilon) \subset \Int \mc F \cap (\reals^n \times \mc V_\epsilon) \subsetneq \Phi$, and thus $\{(x_{k}, v_{k})\}_{k=t}^\infty  \subseteq \mc{B}_{\delta^*}(\Sigma) \subset \Phi$. Hence, for all $k\geq t$,
\begin{equation}
	(x_{k+1},v_{k+1}) = \tilde T(x_k,v_k),
\end{equation}
where $\tilde T$ is defined in Corollary~\ref{cor:continuity_fg}. As $\tilde T$ is continuous (Corollary~\ref{cor:continuity_fg}), and $V$ is non-increasing along solutions of \eqref{eq:closed-loop-fg-system} (Corollary~\ref{lmm:lyapunov_decrement}), the invariance principle \cite[Theorem 6.3]{laSalle1976discrete} implies that
\begin{align*}
    (x_{k}, v_{k}) \to \mc{M} \text{ as } k\to\infty
\end{align*}
where $\mc{M} \subset \Phi$ denotes the largest invariant subset of 
\begin{equation}
	\Omega = \{(x,v) \in \Phi~|~ V(\bar g(f(x,v)) - V(v) = 0\}.
\end{equation}
Moreover, \eqref{eq:AG_2} implies that $(x_k,v_k) \to \Sigma$ as $k\to \infty$ and thus $(x_k,v_k) \to \mc{M}\cap \Sigma$ as $k\to \infty$.


We claim that $\mc{M} \cap \Sigma = \{(x^*, v^*)\}$; evidently, $(x^*, v^*) \in \mc{M}$ and $(x^*, v^*) \in \Sigma$. Recall that $\mc{M}\subset \Omega\subset \Phi$, thus by Lemma~\ref{lmm:lyapunov_decrease_estimate},
\begin{equation}
	(x,v)\in \mc{M}  \implies \bar g(f(x,v)) = v
\end{equation}
furthermore, by virtue of Theorem~\ref{thm:MPC_stab},
\begin{equation}
	(x,v)\in \Sigma \implies x = f(x,v)
\end{equation}
and thus
\begin{equation}
    \label{eq:necessary-invariance}
     (x,v)\in \mc{M}\cap \Sigma  \implies \bar g(f(x,v)) = \bar g(x) = v.
\end{equation}
Moreover, by Lemma~\ref{lmm:convergence}, for all $(x, v) \in \Sigma = \mc{B}_0(\Sigma)$ we have that $v \neq v^* \implies g(x) \neq v$ and thus 
\begin{align}
    \label{eq:necessary-sigma}
	 (x, v) \in \Sigma \text{ and } \bar g(x) = v \implies v = v^*.
\end{align}
Taking the logical conjunction of right-hand sides of \eqref{eq:necessary-invariance} and \eqref{eq:necessary-sigma} immediately yields the implication
\begin{equation}
	(x,v) \in \mc{M}\cap \Sigma \implies v = v^*,
\end{equation}
and thus $\mc{M}\cap\Sigma = \{(x,v)~|~x = G_x v,~v= v^*\} = \{(x^*,v^*)\}$ as claimed.


Lyapunov stability of $(x^*,v^*)$ follows from Corollary~\ref{lmm:lyapunov_decrement} and the ISS stability of the tracking error. Therefore, the sequence $\{(x_k,v_k)\}_{k=0}^\infty \subseteq \Lambda$ is well defined, $x_0 \in \Pi_x \Phi$ implies that $(x_k,v_k) \to (x^*,v^*)$ as $k\to \infty$ and $(x^*,v^*)$ is a Lyapunov stable equilibrium point of \eqref{eq:closed-loop-fg-system}.
\end{proof}

\begin{thm}[Finite-time Convergence] \label{thm:finite_time_convergence}
Let Assumptions~\ref{ass:stabilizable}--\ref{ass:under_approx} hold and consider the closed-loop system \eqref{eq:closed-loop-fg-system}. Then, for all $x_0\in \proj{x}\Phi$, there exists $t \geq 0$ such that $v_k = v^*$ for all $k \geq t$.
\end{thm}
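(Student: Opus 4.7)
The plan is to leverage the asymptotic convergence already established in Theorem~\ref{thm:asymptotic_convergence}, together with the fact that $(x^*,v^*)$ lies in the interior of $\mc F$, to show that for $k$ sufficiently large the FG optimization \eqref{eq:gbar_def} is solved exactly at $v^*$.

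First, I would invoke Theorem~\ref{thm:asymptotic_convergence} to get $x_k\to x^*$ and $v_k\to v^*$, hence also $v_{k-1}\to v^*$. Next, using Assumption~\ref{ass:under_approx} ($\Sigma\subset\Int\mc F$) together with $(x^*,v^*)\in\Sigma$, I would pick $\rho>0$ such that the ball $\mc B_\rho\bigl((x^*,v^*)\bigr)$ is contained in $\mc F$. Convergence then supplies $K\in\ints$ such that, for every $k\geq K$, both $(x_k,v_{k-1})$ and $(x_k,v^*)$ lie in $\mc B_\rho\bigl((x^*,v^*)\bigr)\subset\mc F$; combined with $v_{k-1},v^*\in\mc V_\epsilon$, this gives $(x_k,v_{k-1})\in\Phi$ and $v^*\in S_v(\Phi,x_k)$.

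Since $(x_k,v_{k-1})\in\Phi$ for $k\geq K$, the switching rule \eqref{eq:g_def} forces $v_k=\bar g(x_k)$. I would then observe that $v^*$ is the global minimizer of $V$ over all of $\mc V_\epsilon$: in the injective branch of \eqref{eq:psi_def} this is precisely the definition \eqref{eq:vstar}, and in the non-injective branch it holds because $V(v)=\|v-v^*\|^2$ vanishes at $v^*$. Because $S_v(\Phi,x_k)\subseteq\mc V_\epsilon$ contains $v^*$, strong convexity of $V$ makes $\bar g(x_k)=v^*$ the unique minimizer, and hence $v_k=v^*$ for all $k\geq K$, which is the desired finite-time property with $t=K$.

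The statement is essentially an immediate corollary of asymptotic stability, so there is no serious obstacle: the real analytic work was already done in Theorem~\ref{thm:asymptotic_convergence}. The only subtlety to be careful about is keeping track of both requirements $(x_k,v_{k-1})\in\Phi$ (so that the governor updates rather than holds) and $v^*\in S_v(\Phi,x_k)$ (so that $v^*$ is feasible for the FG QP); both follow simultaneously from membership in the same single ball $\mc B_\rho\bigl((x^*,v^*)\bigr)$, so no further quantitative estimates are required.
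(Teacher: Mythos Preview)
Your proposal is correct and follows essentially the same approach as the paper: both invoke Theorem~\ref{thm:asymptotic_convergence}, use $(x^*,v^*)\in\Sigma\subset\Int\mc F$ to ensure that $v^*$ is eventually feasible for the FG optimization, and conclude from the global optimality of $v^*$ over $\mc V_\epsilon$. If anything, your version is slightly tidier in that you explicitly verify the switching condition $(x_k,v_{k-1})\in\Phi$ via the convergence $v_{k-1}\to v^*$, whereas the paper argues through the slice $S_x(\Phi,v^*)$ and then handles persistence of $v_k=v^*$ by a short two-case induction distinguishing $\Phi$ from $\Lambda\setminus\Phi$.
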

\begin{proof}
Thanks to Lemma~\ref{lmm:strictly_ss} we know that $(x^*,v^*)\in \Sigma \subset \Int \mc{F}$ and thus $x^* \in \Int S_x(\mc{F},v^*)$ In addition, the definition of $\Phi$ implies that $S_x(\Phi,v) = S_x(\mc{F},v)$ for all $v \in \mc{V}_\epsilon$ and thus $x^* \in \Int S_x(\Phi,v^*)$.

Since $x_k \to x^* \in \Int S_x(\Phi,v^*)$ as $k\to \infty$ (Theorem~\ref{thm:asymptotic_convergence}) there exists a finite $t \geq 0$ such that $x_t \in S_x(\Phi,v^*)$. From the definition \eqref{eq:g_def}, it is evident that $g(x,v) = \bar{g}(x) = v^*$ for all $x \in S_x(\Phi,v^*)$ and thus $v_t = g(x_t,v_{t-1}) = v^*$. Finally, thanks to Theorem~\ref{thm:MPC_stab}, $x_k \in S_x(\Lambda,v^*)$ implies that $x_{k+1} = f(x_k,v_k) \in S_x(\Lambda,v^*)$ and thus we can consider two cases, corresponding to the partition $\Lambda = \Phi \cup (\Lambda\setminus \Phi)$. If $x_k \in S_x(\Phi,v^*)$ then $v_k = g(x_k,v^*) = \bar{g}(x_k) = v^*$, and if $x_k \in S_x(\Lambda\setminus \Phi,v^*)$ then $v_k = g(x_k,v^*) = v^*$ and thus $v_k = v^*$ for all $k \geq t$.
\end{proof}

\section{Numerical Examples} \label{ss:numerical_examples}
\subsection{Double Integrator} \label{ss:double_integrator}
We first consider a double integrator example, which allows us to visualize the geometries of the sets in the paper. The system matrices are 
\begin{gather*}
    A = \begin{bmatrix} 1 & 0.1 \\ 0 & 1 \end{bmatrix}, 
    B = \begin{bmatrix} 0 \\ 0.1 \end{bmatrix}, 
    C = \begin{bmatrix} 1 & 0 \\ 0 & 1 \\ 0 & 0 \end{bmatrix}, 
    D = \begin{bmatrix} 0 \\ 0 \\ 1 \end{bmatrix}, 
\end{gather*}
$E = \begin{bmatrix} 1 & 0 \end{bmatrix}$, and $F = 0$, and the sampling time is $t_s=0.1$. The default constraint set is 
\begin{equation}
	\mc{Y}_1 = [-1,~1] \times [-0.25,~0.25] \times [-0.25,~0.25],
	\label{eq:const1}
\end{equation}
and the MPC parameters are $Q = I$, $R = 1$, and $N = 10$ unless otherwise specified. The initial condition $x_0 = [-1,~0]^T$, and reference $r = 0.75$ are chosen such that $x_0 \notin S_x(\Gamma_{10},G_z^{-1}r)$. For the following figures, the terminal set is $\mathcal{T} = \tilde{O}_{\infty}^{0.01}$ and is computed using the procedure in Appendix A.

Figure~\ref{fig:gamma_vs_T} illustrates the geometries of $\mathcal{T}$ and $\Gamma_{10}$. The terminal set $\mathcal{T}$ is entirely contained in the feasible set, and in both cases $v$ is implicitly bounded by the constraints on $x_1$. The feasible sets form an increasing sequence of sets in $N$, i.e., $\Gamma_N \subseteq \Gamma_{N+1}$ for all $N\geq 0$. This is illustrated in Figure~\ref{fig:DOA_vs_N} which uses a modified constraint set
\begin{equation*}
	\mc{Y}_2 = [-1,~1] \times [-1,~1] \times [-0.05,~0.05]
\end{equation*}
for clarity. The set $\Gamma_N$ appears to be approaching some $\Gamma_\infty \supseteq \Gamma_N$, we hypothesize that this occurs whenever $\mc{Y}$ is compact. 

\begin{figure}[h]
	\centering
	\includegraphics[width = \columnwidth]{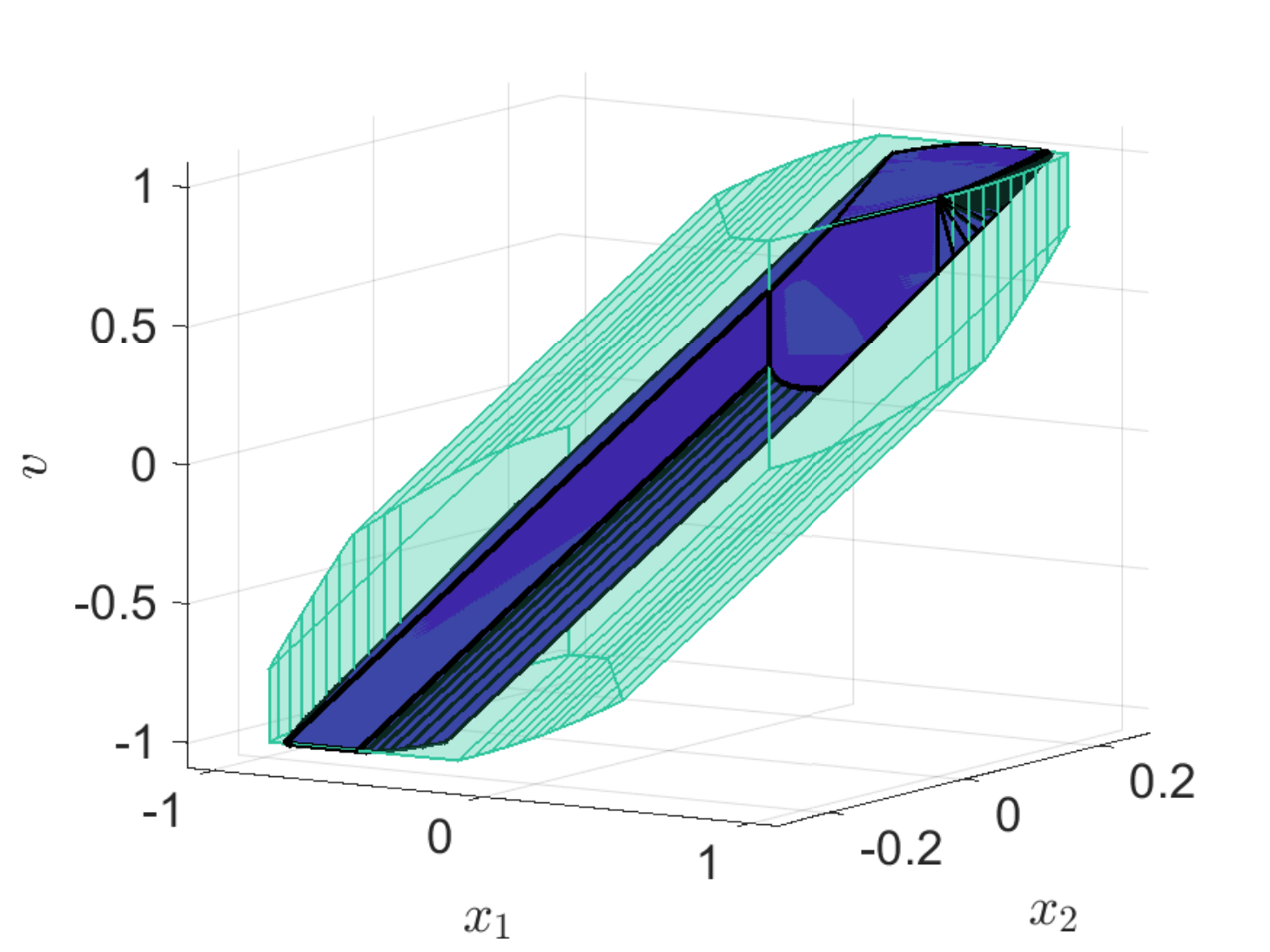}
	\caption{Terminal set $\mathcal{T}$ (blue) encased in feasible set $\Gamma_{10}$ (teal) for the double integrator with constraints $\mathcal{Y}_1$.}
	\label{fig:gamma_vs_T}
\end{figure}

\begin{figure}[h]
	\centering
	\includegraphics[width = \columnwidth]{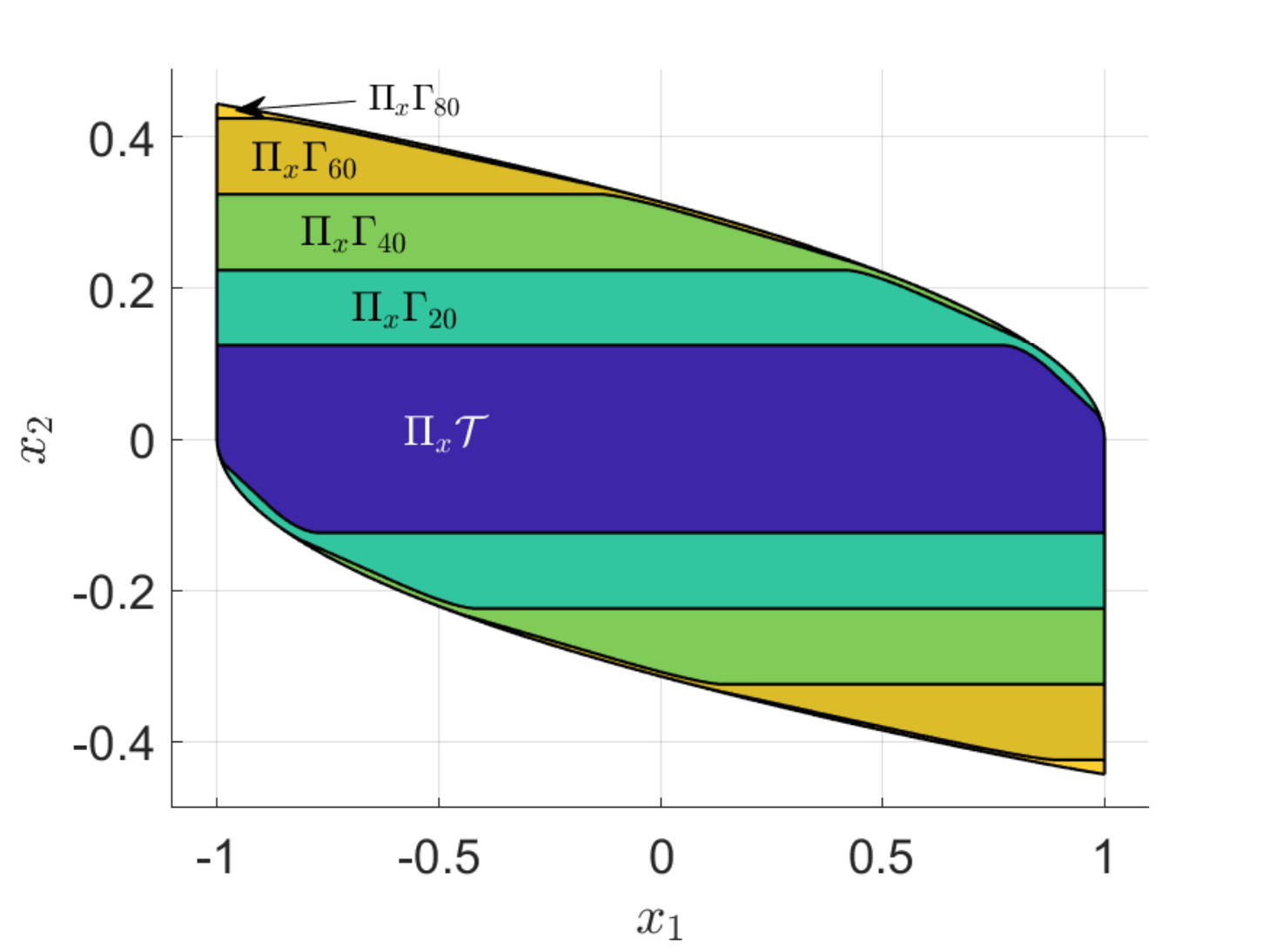}
	\caption{Increasing the control horizon $N$ expands the size of the feasible set while the terminal set stays constant. Here $\mathcal{T} = \Gamma_0 = \tilde{O}_{\infty}^{0.01}$ with constraints $\mathcal{Y}_2$.}
	\label{fig:DOA_vs_N}
\end{figure}

Figure \ref{fig:dint_traj} uses the original constraints \eqref{eq:const1} and displays the responses of the closed-loop system under the MPC + FG feedback policy. All constraints are satisfied and the auxiliary reference converges to $r$ in finite time as predicted by the theory. The same dynamics are displayed in Figure \ref{fig:3D_traj} and illustrates how the MPC + FG navigates $\Gamma_{10}$. By the time $v_k=r$, the current state $x_k$ of the system has entered $\mathcal{D}_{MPC}$ (yellow). From here, the FG holds the auxiliary reference constant and the MPC controller ensures that $x_k \to \bar{x}_r$ as $k\to\infty$.

\begin{figure}[h]
	\centering
	\includegraphics[width = \columnwidth]{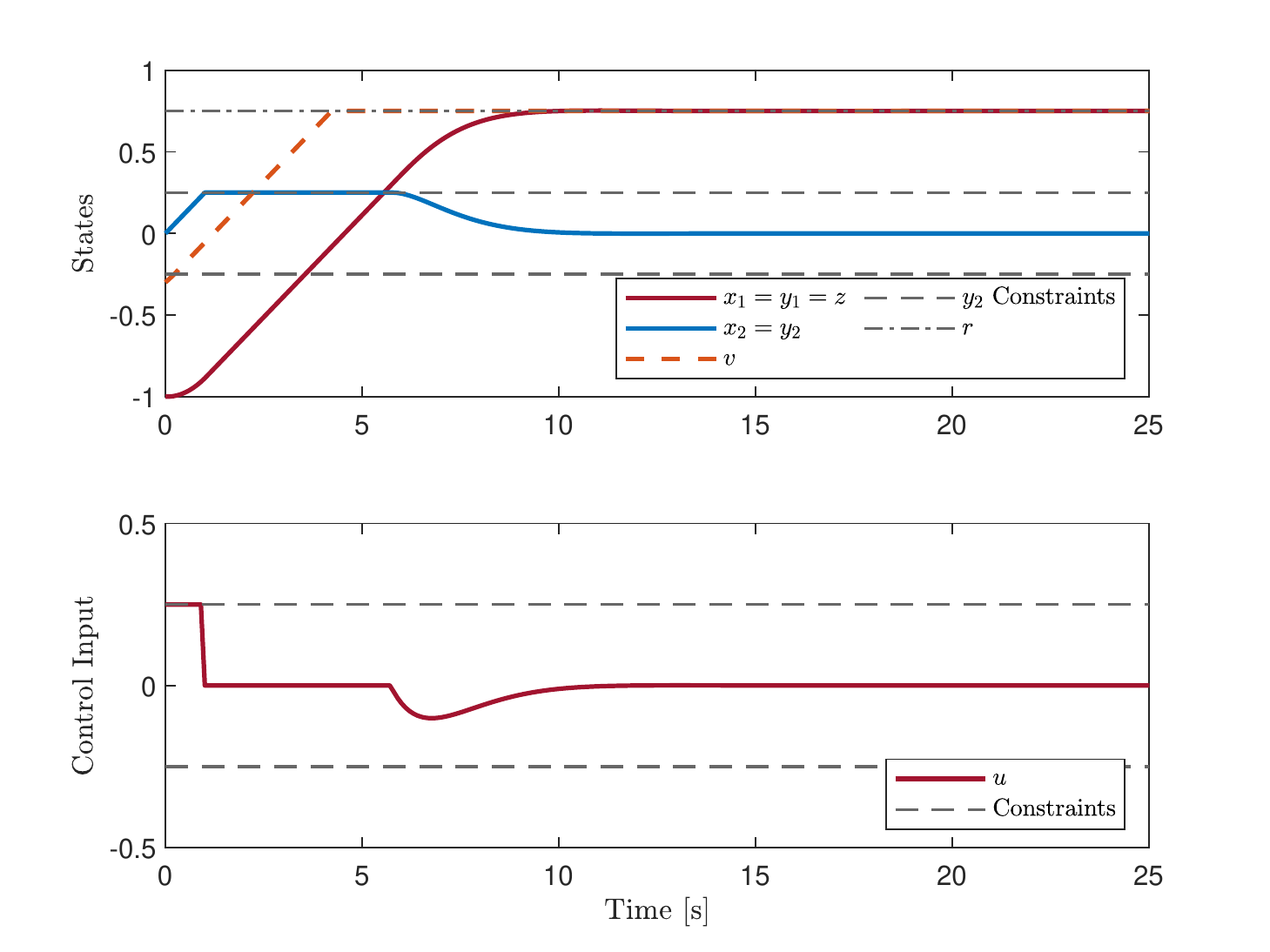}
	\caption{Closed-loop double integrator trajectories for the MPC + FG control law using constraints $\mathcal{Y}_1$.}
	\label{fig:dint_traj}
\end{figure}

\begin{figure}[h]
	\centering
	\includegraphics[width = \columnwidth]{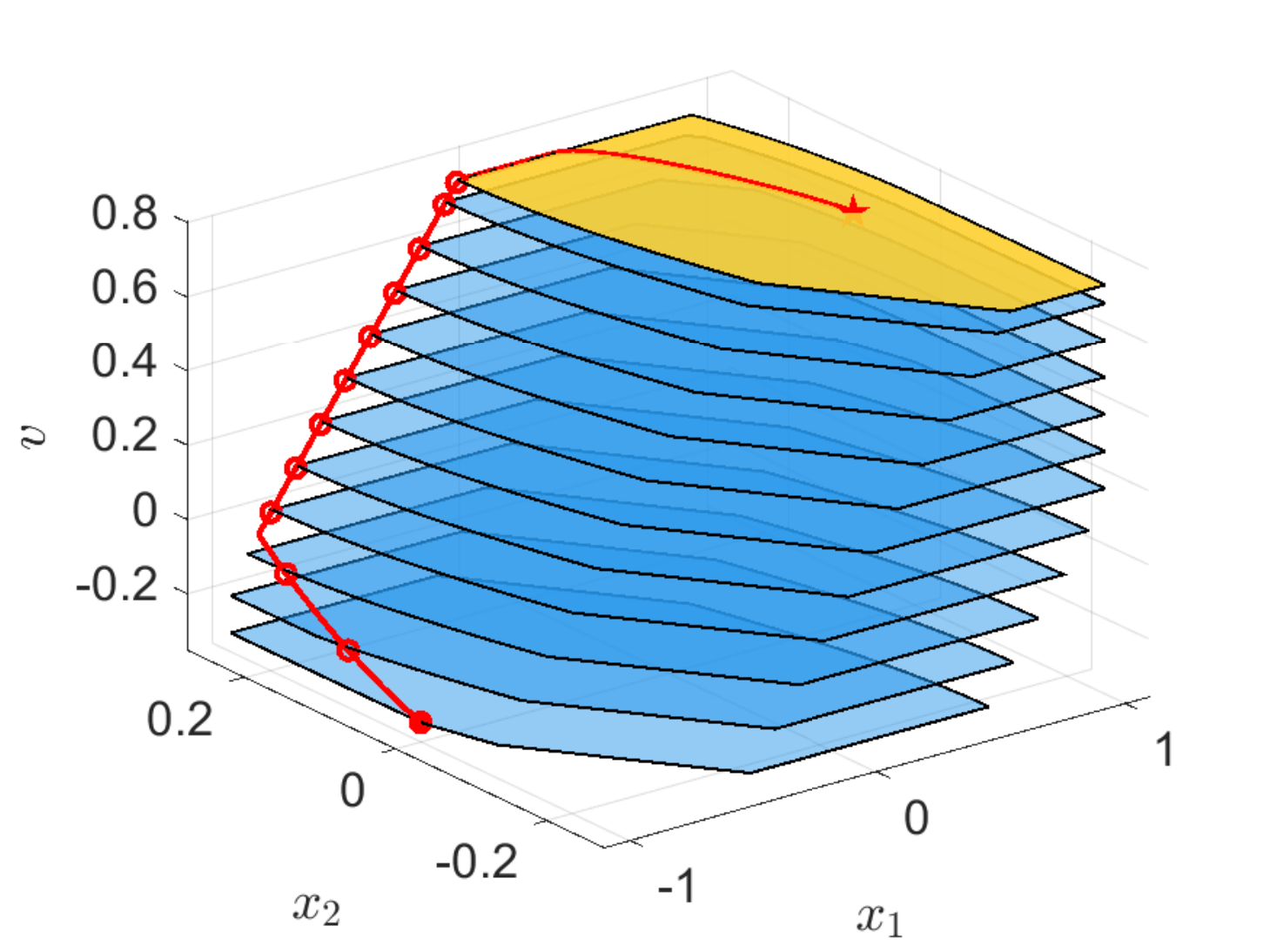}
	\caption{A closed-loop trajectory of the double integrator over the slices $S_x(\Gamma_{10},v)$ for different values of $v$ with constraints $\mathcal{Y}_1$. Circle markers show when the trajectory enters each slice and the star is the point $(x^*_r,v^*_r)$.}
	\label{fig:3D_traj}
\end{figure}

Figure \ref{fig:di_comparison} compares the MPC + FG feedback law with $N = 10$ to an un-goverened MPC controller with $N = N^* = 236$ where
\begin{equation}\label{eq:Nstar}
	N^* = N^*(x_0,r,\mc{T}) = \inf_i~\{i~|~(x_0,G_z^{-1}r)\in \Gamma_i\}
\end{equation}
is the smallest horizon length such that the MPC policy is feasible for the chosen $x_0$. Both these control laws are also compared to a CG combined with an underlying linear quadratic regulator (LQR). All three controllers use $Q=100I$ and $R=1$. The constraint set
\begin{equation*}
	\mc{Y}_3 = [-20,~20] \times [-1,~1] \times [-0.25,~0.25]
\end{equation*}
is chosen to illustrate what happens when the initial conditions $x_0 = [-17,~0]^T$, and reference $r=4$ are chosen far away from each other. As displayed in Figure~\ref{fig:di_comparison}, there is $\approx$ 37\% increase in rise time using the FG vs the ungoverned MPC with $N = 236$, but the worst case computation time for the combined FG and MPC feedback policy, with $N = 10$, is over 5000 times faster than of the ungoverned MPC with $N = 236$, see Table \ref{tab:texe_di}.

\begin{figure}[h]
	\centering
	\includegraphics[width = \columnwidth]{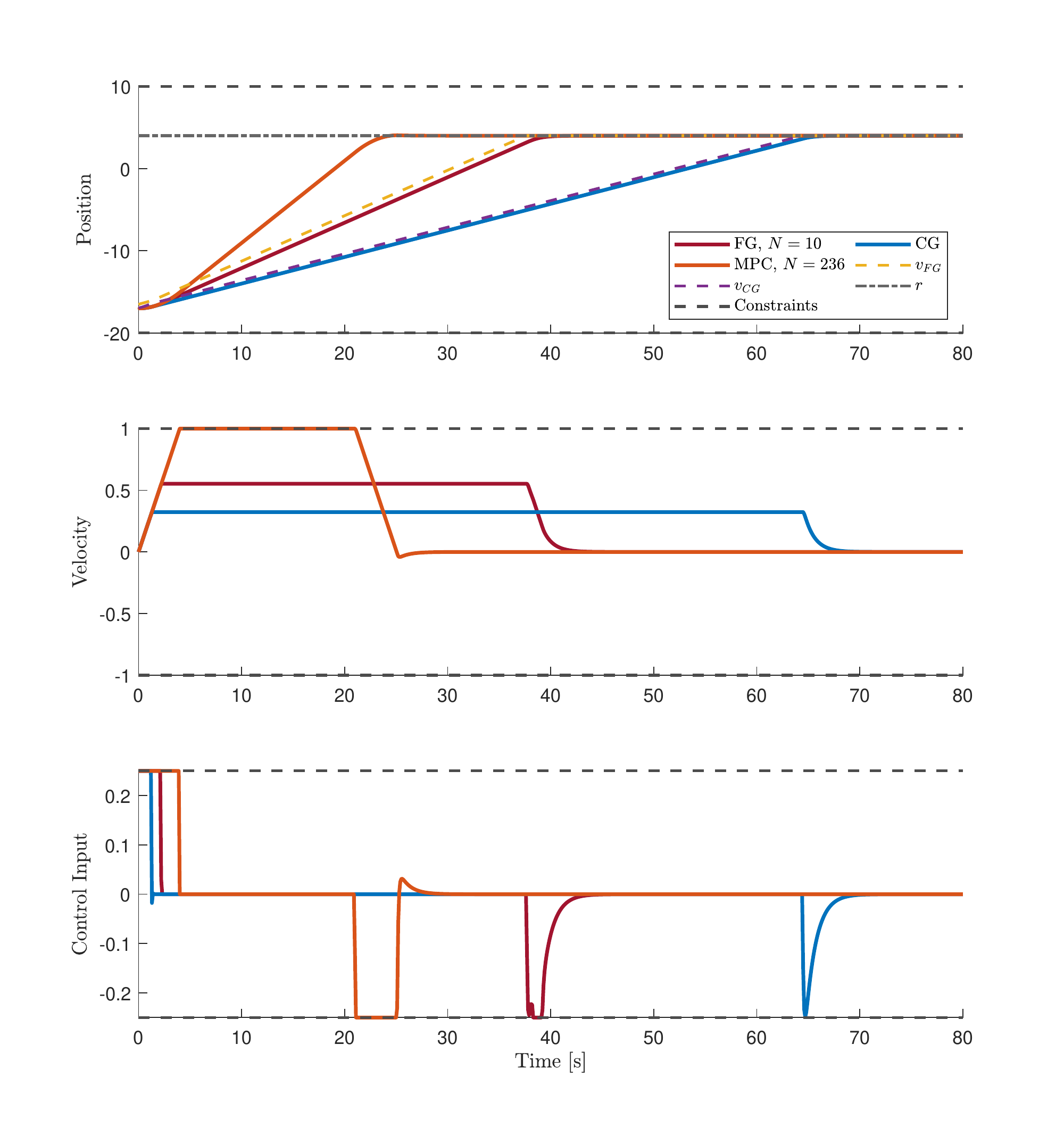}
	\caption{Closed-loop double integrator dynamics for various control laws with constraints $\mathcal{Y}_3$. The FG outperforms the CG, and although the MPC has the best performance, its control horizon is too large for real-time applications.}
	\label{fig:di_comparison}
\end{figure}

\begin{table}[h]
\centering
\caption{Execution time data for the double integrator example.}
\label{tab:texe_di}
\resizebox{\columnwidth}{!}{
\begin{tabular}{|c|c|c|c|c|} \hline
 & FG($N = 10$) & MPC($N =10$) & MPC ($N = 236$) & CG \\\hline
 TAVE [ms] & $0.0126$ & $0.0884$ & $255$ & $0.00833$ \\ \hline
 TMAX [ms] & $0.063$ & $0.345$ & $2170$ & $0.0369$ \\ \hline
\end{tabular}}
\end{table}

\subsection{Lateral Vehicle Dynamics} \label{ss:lateral_vehicle}
This section applies the FG to the lateral dynamics of a car moving forward at a constant longitudinal speed of $V_x = 30 m/s$. The model is based on the one in \cite{wurts2018collision} and roughly represents a 2017 BMW 740i sedan.

\begin{figure}[htbp]
	\centering
	\includegraphics[width=0.95\columnwidth]{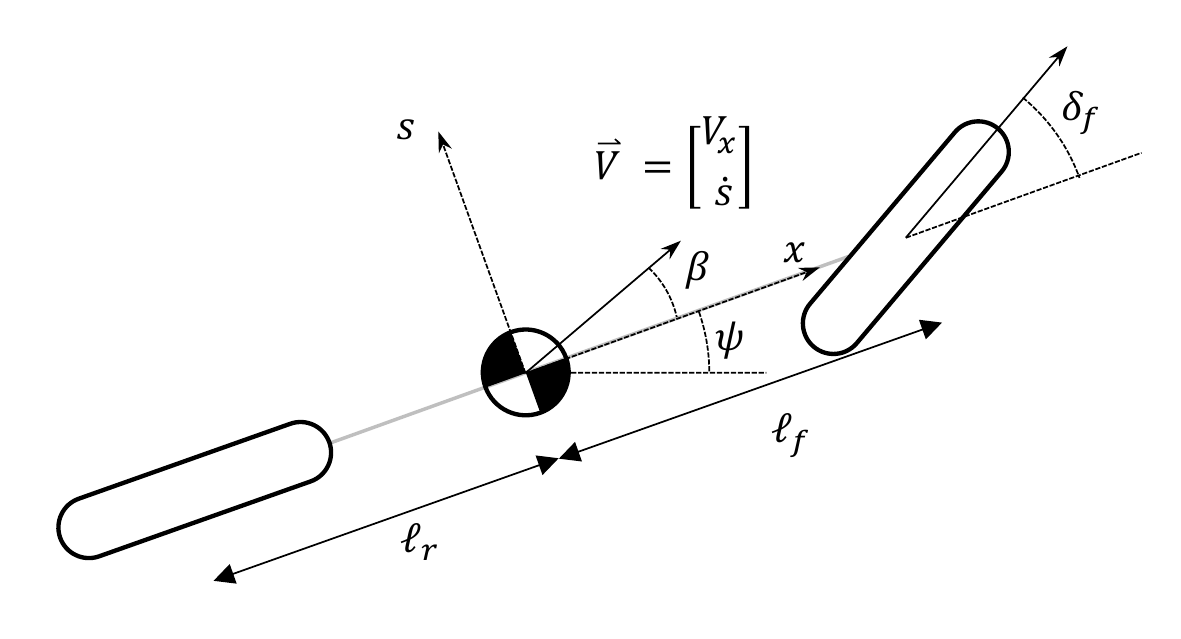}
	\caption{The bicycle model of the lateral vehicle dynamics.}
	\label{fig:bicycle_diagram}
\end{figure}

A diagram of the bicycle model is displayed in Figure~\ref{fig:bicycle_diagram}. The state of the system is $x^T = [s~~ \psi~~ \beta~~ \omega]$ where $s$ is the lateral position of the vehicle, $\psi$ is the yaw angle, $\beta = \dot{s}/V_x$ is the sideslip angle, and $\omega = \dot{\psi}$ is the yaw rate. The control input is the front steering angle $u = \delta_f$ and the system is subject to constraints on $y^T = [\alpha_f ~~ \alpha_r ~~ \delta_f]$ where $\alpha_f$ and $\alpha_r$ are the front and rear slip angles. The tracking output is $z = s$. The system matrices are
\begin{gather*}
	A = \begin{bmatrix}
		0 & V_x & V_x & 0\\
		0 & 0 & 0 & 1\\
		0 & 0 & -\frac{2C_\alpha}{m V_x} & \frac{C_\alpha (\ell_r - \ell_f)}{m V_x^2} - 1\\
		0 & 0 & \frac{C_\alpha (\ell_r - \ell_f)}{I_{zz}} & -\frac{C_\alpha(\ell_r^2 + \ell_f^2)}{I_{zz} V_x} 
	\end{bmatrix},B = \begin{bmatrix}
		0\\ 
		0\\ 
		\frac{C_\alpha}{m V_x}\\ 
		\frac{C_\alpha \ell_f}{I_{zz}}
	\end{bmatrix}, \\
	  C = \begin{bmatrix}
		0 & 0 & -1 & -\frac{\ell_f}{V_x}\\
		0 & 0 & -1 & \frac{\ell_r}{V_x}\\
		0 & 0 & 0 & 0\\
	\end{bmatrix}, ~D = \begin{bmatrix}
		1\\
		0\\
		1\\
	\end{bmatrix},
\end{gather*}

$E = [1~0~0~0]$, and $F = 0$, where $m = 2041~kg$ is the mass of the vehicle, $I_{zz} = 4964~kg\cdot m^2$ is the moment of inertia about the yaw axis, $\ell_f = 1.56~m$ and $\ell_r = 1.64~m$ are the moment arms of the front and rear wheels relative to the center of mass, and $C_\alpha = 246994~N/rad$ is the tire stiffness. The continuous time system matrices are converted to discrete-time using a zero-order hold (\texttt{c2d} in MATLAB) with a sampling time of $t_s = 0.01$ seconds. The constraint set is
\begin{equation}
	\mc{Y} = [-8^\circ,~8^\circ] \times [-8^\circ,~8^\circ] \times [-30^\circ,~30^\circ], 
\end{equation}
which represents limits on the front and rear slip angles (to prevent tire slip and drifting) and a mechanical limit on the steering angle. The initial condition is $x_0 = 0$, the target position is $r = 5~m$, and the weighting matrices are $Q = E^TE$ and $R = 0.1$. The terminal penalty and gain are computed using the linear quadratic regulator and the terminal set is $\mc{T} = \tilde O_\infty^{0.01}$, computed using the procedure in Appendix A.

Figure~\ref{fig:fg_vs_mpc_car} compares the combined FG + MPC feedback law for $N = 15$ with an ungoverned MPC controller with $N = N^* = 76$ where $N^*$ is as in \eqref{eq:Nstar}. The rise and settling times of the combined feedback law is comparable with that of the ungoverned MPC controller despite a $94\%$ reduction in worst-case computation time, see Table~\ref{tab:texe_lv}.
\begin{figure}[htbp]
	\centering
	\includegraphics[width=0.95\columnwidth]{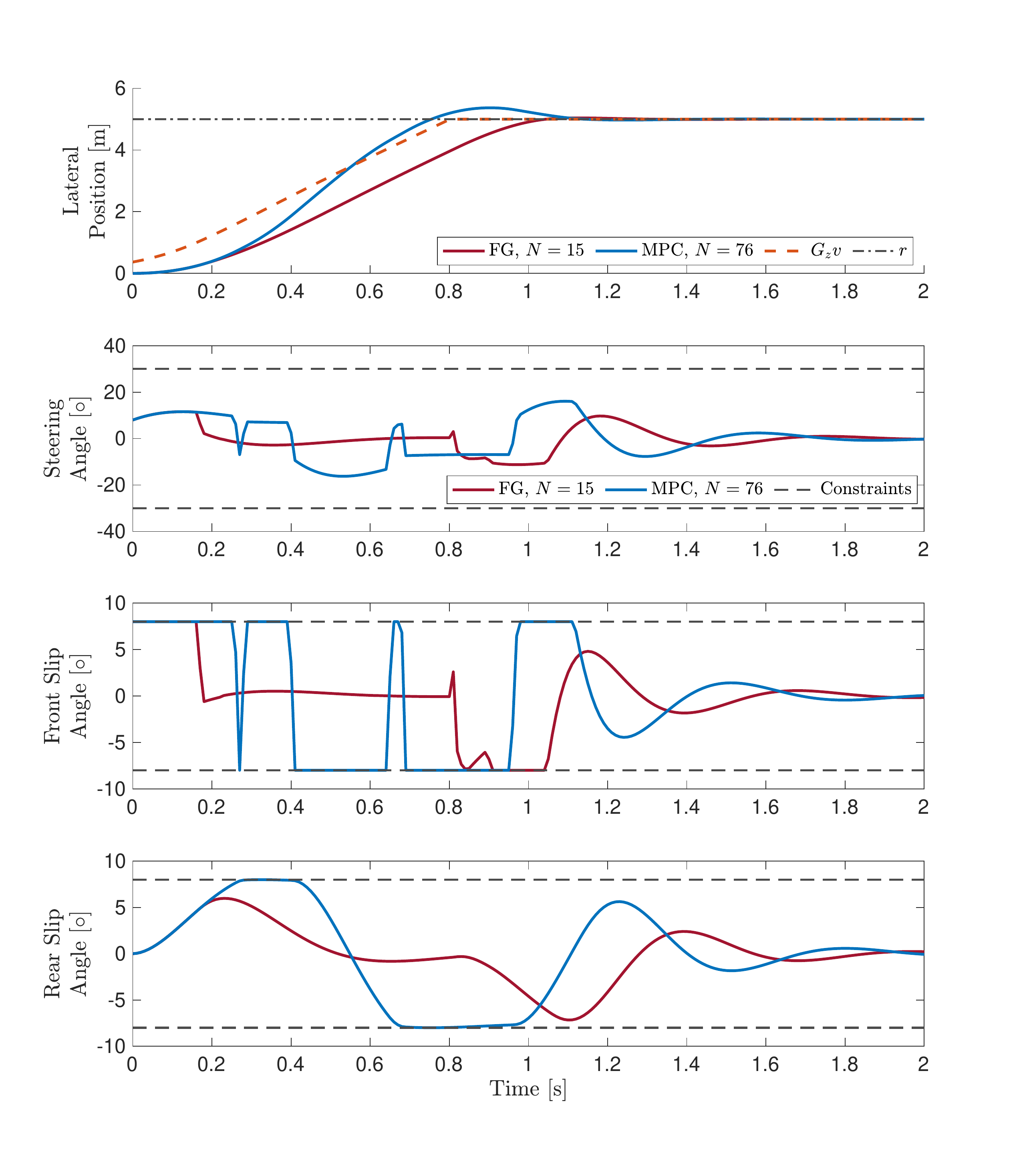}
	\caption{Closed-loop lateral vehicle dynamics responses for the FG with $N=15$ vs. an ungoverned MPC controller with $N=76$, the shortest $N$ such that the initial problem is feasible. The performance (rise-time) of the FG + MPC combination is only marginally slower than the ungoverned MPC controller which needs a significantly longer horizon to ensure feasibility.}
	\label{fig:fg_vs_mpc_car}
\end{figure}

\begin{table}[h]
\centering
\caption{Execution time data for the lateral vehicle dynamics example.}
\label{tab:texe_lv}
\resizebox{\columnwidth}{!}{
\begin{tabular}{|c|c|c|c|} \hline
 & FG($N = 15$) & MPC($N = 15$) & MPC($N = 75$) \\\hline
 TAVE [ms] & $1.4$ & $0.22$ & $11.7$ \\ \hline
 TMAX [ms] & $2.7$ & $0.53$ & $54.5$\\ \hline
\end{tabular}}
\end{table}

Figure~\ref{fig:fig_vs_N_car} compares the response of the closed-loop system for several values of $N$ and with the CG + LQR. As expected, the FG + MPC solution provides a faster response than the CG + LQR solution and the system response becomes faster as $N$ increases. As $N \to N^*$ the filtering effect diminishes until the response of the pure MPC controller is recovered.

\begin{figure}[htbp]
	\centering
	\includegraphics[width=0.95\columnwidth]{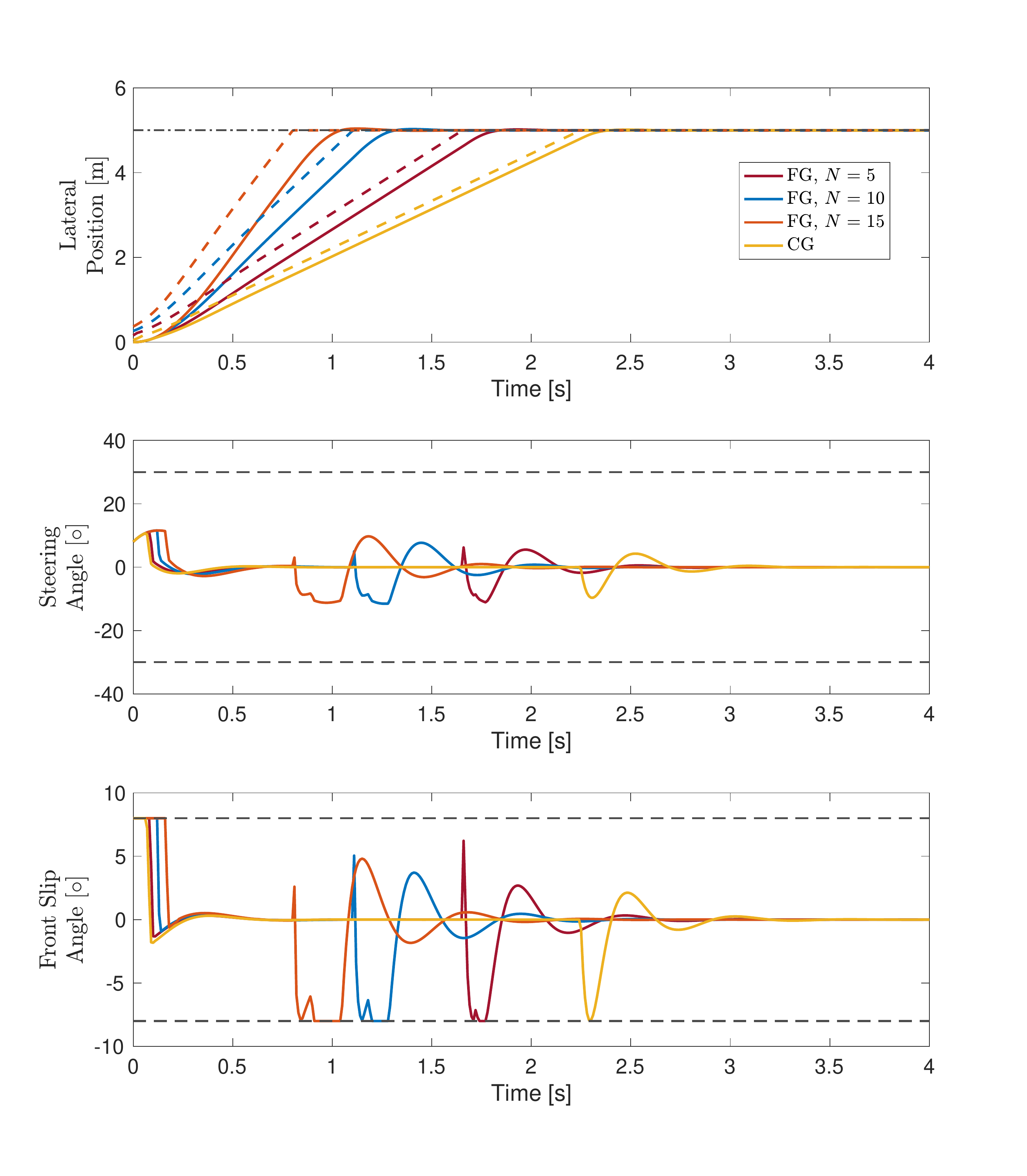}
	\caption{Closed-loop lateral vehicle dynamics responses for varying horizon lengths. The FG outperforms the CG and the system responds more quickly as $N$ increases.}
	\label{fig:fig_vs_N_car}
\end{figure}

\section{Conclusions} \label{ss:conclusions}
This paper has proposed the Feasibility Governor (FG), an add-on unit that expands the region of attraction of linear model predictive controllers by manipulating the reference input passed to the controller and is designed to interfere minimally with the operation of the nominal controller. It was shown that the FG is safe, converges in finite time, and extends the region of attraction of MPC controllers at a fraction of the computation cost associated with increasing the prediction horizon. Future work includes extending the FG to nonlinear settings, and exploring parallelizeable  methods for synthesizing inner-approximation of the feasible set to enable to application of the FG to large scale systems.

\begin{appendix}
\section*{A. Computing the Terminal Set Mapping}
Substituting the terminal control law \eqref{eq:terminal_control_law} into the open-loop dynamics \eqref{eq:LTI_system} and using that $x_v = G_x v$ and $u_v = G_u v$ yields
\begin{align} \label{eq:LTI_system_bar}
x_{k+1} &= \bar{A} x_k + \bar{B} v\\
y_k &= \bar{C} x_k + \bar{D} v \in \mc{Y}
\end{align}
where $\bar{A} = A-BK$, $\bar{B} = B\left(KG_x + G_u\right)$, $\bar{C} = C-DK$, and $\bar{D} = D\left(KG_x + G_u\right)$. This is a standard form in the reference governor literature, see e.g., \cite{gilbert1991linear,gilbert1994nonlinear,garone2017reference}, which makes use of the maximal constraint admissible set,
\begin{multline}
	O_\infty = \{(x,v)~|~\bar{C}\bar{A}^kx + \\
	\bar{C}\left(I-\bar{A}\right)^{-1}\left(I-\bar{A}^k\right)\bar{B}v 
	+ \bar{D}v \in \mc{Y},~\forall k\geq 0\}.
\end{multline}
Since $O_\infty$ is maximal, invariant, and constraint admissible \cite[Theorem 1.1]{gilbert1994nonlinear} $\mc{T}  = O_\infty$ is the largest possible terminal set mapping (for a given terminal feedback law). However, $O_\infty$ might not be representable by a finite number of linear inequalities. Thus whenever $O_\infty$ is not finitely determined, we replace it with 
\begin{equation}
	\tilde{O}_\infty^\epsilon = O_\infty \cap O^\epsilon
\end{equation}
 where $O^\epsilon = \{(x,v)~|~ (\bar{D} + \bar{C}(I-\bar{A})^{-1}\bar{B}) v \in (1-\epsilon) \mc{Y}\}$. The set $\tilde{O}_\infty^\epsilon$ can be made arbitrarily close to $O_\infty$ but is guaranteed to be representable by a finite number of linear inequalities and is still forward invariant and constraint admissible. Algorithms for computing $\tilde{O}_\infty^\epsilon$ are well established and can be found in \cite{gilbert1991linear,kolmanovsky1995maximal}; they yield matrices $T = [T_x~~T_v]$ and a vector $c$, such that
\begin{equation}
	\tilde{O}_\infty^\epsilon = \{(x,v)~|~T_x x + T_v v \leq c\}.
\end{equation}

\section*{B. Condensed Matrix Definitions}
Let $\otimes$ denote the Kronecker product and define
\begin{gather*}
    \hat{A} = \begin{bmatrix}   I \\ A \\ A^2 \\ \vdots \\ A^N \end{bmatrix},
    \hat{B} = \begin{bmatrix}   0 & \cdots & \cdots & 0\\
                                B & 0 & \cdots & 0\\
                                AB & \ddots & \ddots & \vdots\\
                                \vdots & \ddots & \ddots & 0\\
                                A^{N-1}B & \cdots & AB & B \end{bmatrix}\\
    \hat{C} = \begin{bmatrix}   I_N \otimes YC  & 0\\
                                0 & T_x \end{bmatrix}
    \hat{D} = \begin{bmatrix}   I_N \otimes YD \\ 0\end{bmatrix} \\
    \hat{H} = \begin{bmatrix}   I_{N}\otimes Q & 0 \\
                                0 & P \end{bmatrix} \text{ and }
    \hat{T}_v = \begin{bmatrix} 0 \\ T_v \end{bmatrix}
\end{gather*}                
Then the matrices in \eqref{eq:Condensed_LMPC_OCP} are
\begin{gather*}
    H = \hat{B}^T\hat{H}\hat{B} + I_N \otimes R, ~W_x = \hat{B}^T\hat{H}\hat{A},~W = \begin{bmatrix} W_x & W_v \end{bmatrix}\\
    W_v = -\left(W_x G_x + H\left(1_N \otimes G_u \right)\right),\\
    M = \hat{C}\hat{B} + \hat{D},~L = \begin{bmatrix} \hat{C}\hat{A} & \hat{T}_v \end{bmatrix}, \text{ and } b = \begin{bmatrix} 1_N \otimes h \\ c \end{bmatrix},
\end{gather*}
where $1_N$ is a column of $N$ ones.

\section*{C. Proof of Lemma~\ref{lmm:strictly_ss}}
Depending on which condition of Assumption~\ref{ass:xv_in_interior} is satisfied, one of the following holds: 

1) Following from \eqref{eq:terminal_set}, the terminal control law \eqref{eq:terminal_control_law} ensures constraint satisfaction $\forall (x,v)\in\mc{T}$. Therefore, it follows from \eqref{eq:feasible_set} that $\mc{T}\subseteq \Gamma_N$. The statement is then proven by noting $\Sigma\subset\Int\mc{T}\subseteq\Int\Gamma_N$. 

2) Since $(A,B)$ is controllable, there exists a deadbeat gain matrix $L$ such that $(A - BL)^\nu = 0$ \cite{o1981discrete}. 
Thus, given the control law $u_k = \bar u_v-L(x_k \!-\!\bar x_v)$, the closed-loop dynamics of \eqref{eq:LTI_system} 
satisfy $x_k = \bar x_v,~\forall k \geq \nu$. 
Let $O_\infty$ denote the maximum constraint admissible set  \cite{gilbert1991linear} associated with the deadbeat dynamics. It follows by definition that $(x,v)\in O_\infty$ ensures $y_k\in\mathcal{Y}$,
which implies $O_\infty\subseteq\Gamma_N$ due to \eqref{eq:feasible_set}. Since $(A-BL)$ is Schur \cite[Property 2]{o1981discrete} and $v\in\mc{V}_\epsilon$, it follows from \cite[Theorem 2.1]{gilbert1991linear} that $\Sigma\subset\Int O_\infty\subseteq\Int\Gamma_N$. 


\end{appendix}

\begin{IEEEbiography}[{\includegraphics[width=1in,clip,keepaspectratio]{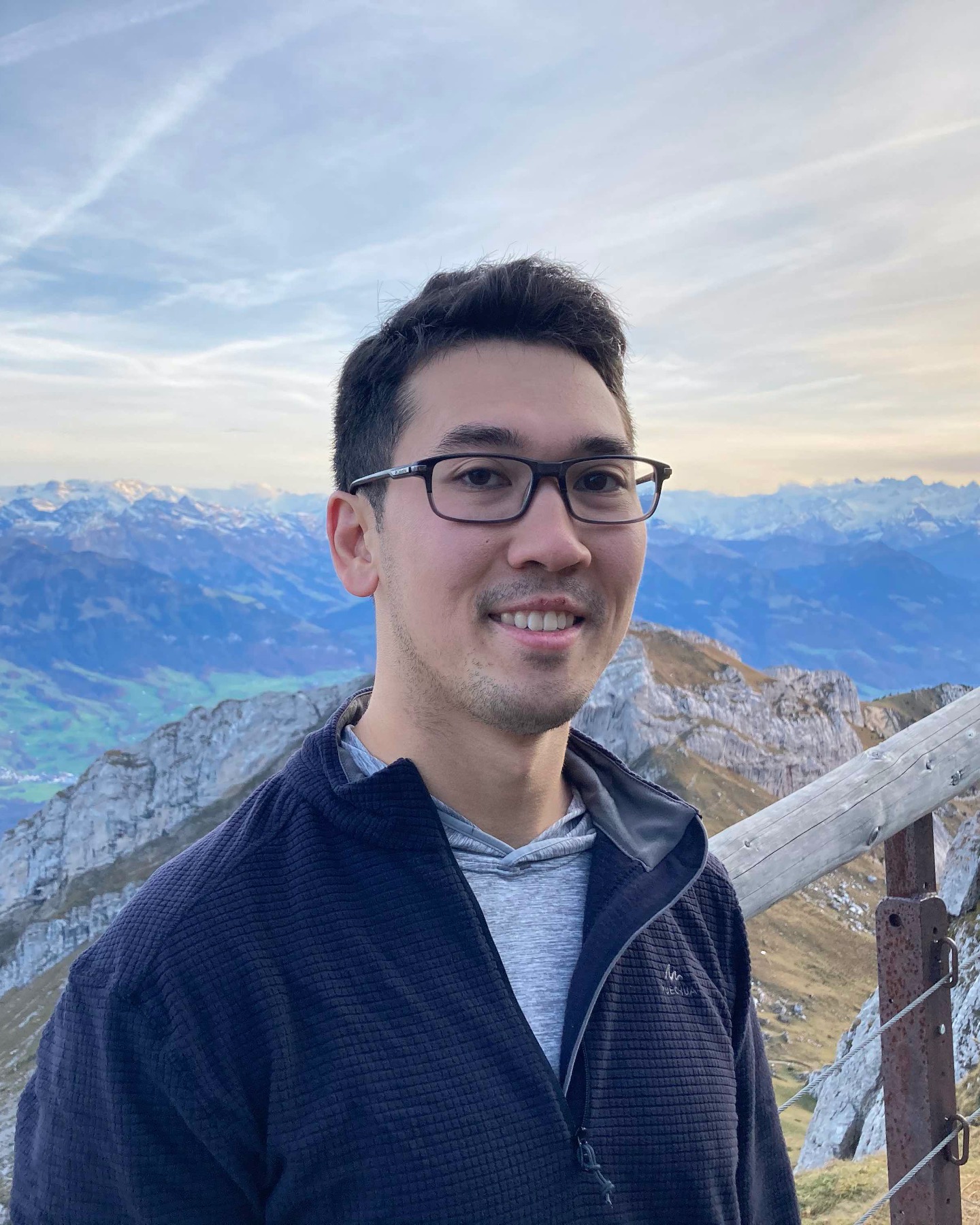}}]{Dominic Liao-McPherson} received his BASc. in Engineering Science from the University of Toronto in 2015 and his PhD. in Aerospace Engineering and Scientific Computing from the University of Michigan  (Ann Arbor) in 2020. He is currently a postdoc in the ETH Zürich Automatic Control Lab. His research interests include constrained control, numerical methods, and algorithms for real-time optimization with applications in aerospace, manufacturing, and energy systems.
\end{IEEEbiography}

\begin{IEEEbiography}[{\includegraphics[width=1in,clip,keepaspectratio]{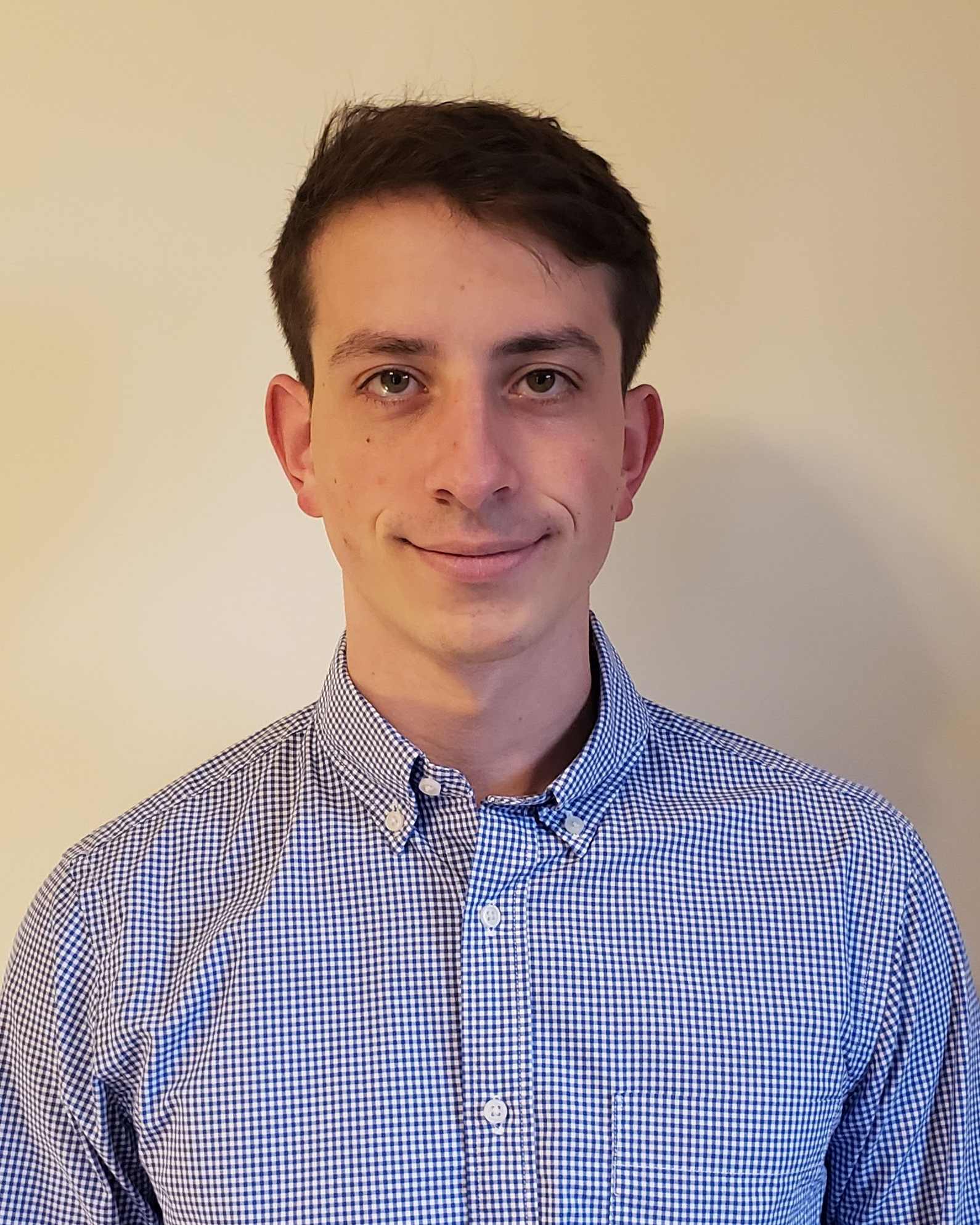}}]{Terrence Skibik} received the B.S degree in electrical engineering from The College of New Jersey, Ewing, NJ USA, in 2019. He is currently working towards the Ph.D. degree at the University of Colorado Boulder, Boulder, CO, USA. His research interests include constrained control and optimization with applications in autonomous systems and renewable energy.
\end{IEEEbiography}

\begin{IEEEbiography}[{\includegraphics[width=1in,clip,keepaspectratio]{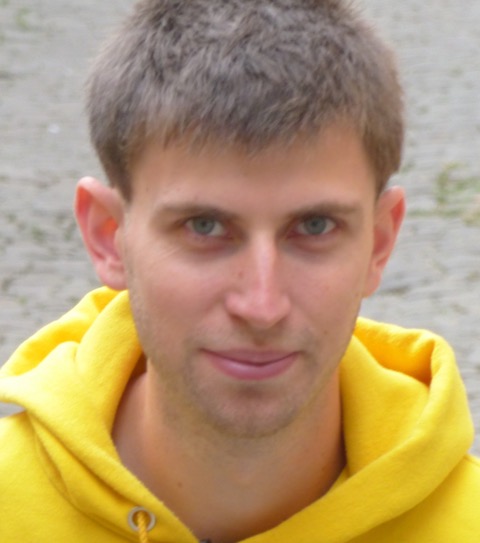}}]{Torbjørn Cunis} is a research fellow of the department of aerospace engineering at the University of Michigan. He received B.Sc. degrees in computer science and aerospace computer engineering from the University of Würzburg in 2013 and 2014, respectively, the M.Sc. degree in automation engineering from the RWTH Aachen University in 2016, and the Dr. degree in systems and control from ISAE-Supaéro, Toulouse, in 2019. His research concerned with the analysis and verification of nonlinear system dynamics, specifically for autonomous vehicles and aircraft, optimal control algorithms, and hybrid system theory. Dr. Cunis is fellow of the Young ZiF at the Centre for Interdisciplinary Research, University of Bielefeld.
\end{IEEEbiography}

\begin{IEEEbiography}[{\includegraphics[width=1in,clip,keepaspectratio]{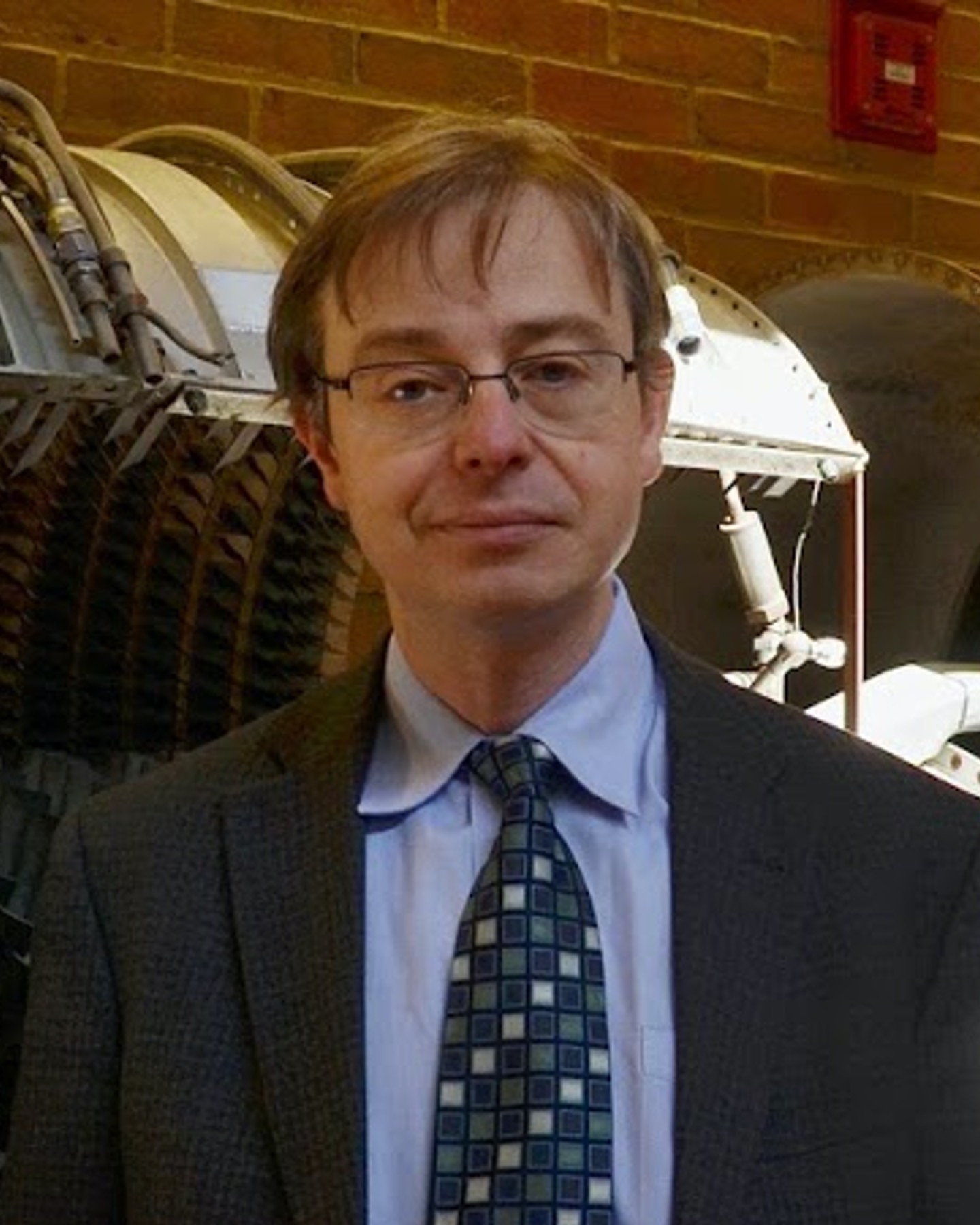}}]{Ilya V. Kolmanovsky} is a professor in the department of aerospace engineering at the University of Michigan, with research interests in control theory for systems with state and control constraints, and in control applications to aerospace and automotive systems.  He received his Ph.D. degree in aerospace engineering from the University of Michigan in 1995. He is a Fellow of IEEE and is named as an inventor on 103 United States patents.
\end{IEEEbiography}

\begin{IEEEbiography}[{\includegraphics[width=1in,clip,keepaspectratio]{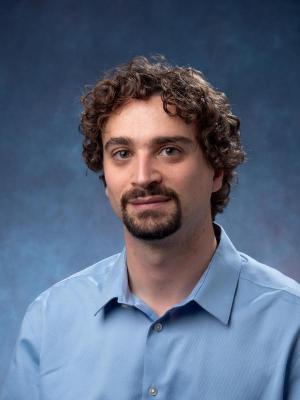}}]{Marco M. Nicotra} received his dual M.S. degree in Mechanical and Electromechanical Engineering from Politecnico di Milano and Universit\'e Libre de Bruxelles, respectively. In 2016 he received his Ph.D. in Systems and Control Engineering from a joint collaboration between Universit\'e Libre de Bruxelles and Universit\`a di Bologna. He was a postdoctoral research fellow at University of Michigan for two years. Since 2018, he is appointed as an Assistant Professor in the department of Electrical, Computer, and Energy Engineering at the University of Colorado, Boulder. His research interests focus in nonlinear and constrained control strategies and their applications to aerospace, energy, robotic, and quantum systems.
\end{IEEEbiography}

\end{document}